\DeclareMathOperator{\GL}{GL}
\DeclareMathOperator{\SL}{SL}
\DeclareMathOperator{\Aut}{Aut}
\newcommand{\frakg}{\mathfrak{g}}
\newcommand{\frakk}{\mathfrak{k}}
\newcommand{\frakl}{\mathfrak{l}}
\newcommand{\frakp}{\mathfrak{p}}
\newcommand{\BB}{\mathbb{B}}
\newcommand{\CC}{\mathbb{C}}
\newcommand{\RR}{\mathbb{R}}
\newcommand{\ZZ}{\mathbb{Z}}
\newcommand{\calF}{\mathcal{F}}
\newcommand{\calH}{\mathcal{H}}
\newcommand{\calI}{\mathcal{I}}
\newcommand{\calK}{\mathcal{K}}
\newcommand{\calL}{\mathcal{L}}
\newcommand{\calO}{\mathcal{O}}
\newcommand{\calP}{\mathcal{P}}
\newcommand{\calS}{\mathcal{S}}
\newcommand{\calW}{\mathcal{W}}
\DeclareMathOperator{\tr}{tr}
\DeclareMathOperator{\Tr}{Tr}
\DeclareMathOperator{\Det}{Det}
\DeclareMathOperator{\End}{End}
\DeclareMathOperator{\id}{id}
\DeclareMathOperator{\rk}{rk}
\theoremstyle{plain}
\newtheorem{theorem}{Theorem}[section]
\newtheorem{proposition}[theorem]{Proposition}
\newtheorem{corollary}[theorem]{Corollary}
\theoremstyle{definition}
\newtheorem{remark}[theorem]{Remark}
\numberwithin{equation}{section}
\title[Generalized Laguerre functions and Whittaker vectors]{Generalized Laguerre functions and Whittaker vectors for holomorphic discrete series}
\author{Jan Frahm}
\address{Department of Mathematics, Aarhus University, Ny Munkegade 118, 8000 Aarhus C, Denmark}
\email{frahm@math.au.dk}
\author{Gestur \'{O}lafsson}
\address{Department of Mathematics, Louisiana State University, Baton Rouge, LA 70803, USA}
\email{olafsson@math.lsu.edu}
\author{Bent {\O}rsted}
\address{Department of Mathematics, Aarhus University, Ny Munkegade 118, 8000 Aarhus C, Denmark}
\email{orsted@math.au.dk}
\begin{document}
\thanks{Part of the research in this paper was carried out within the online research community on Representation Theory and Noncommutative Geometry sponsored by the American Institute of Mathematics.
The first author was partially supported by a research grant from the Villum Foundation (Grant No. 00025373).
The second author was partially supported by Simons grant 586106.}

\subjclass[2010]{Primary 22E46; Secondary 43A85.}

\keywords{Laguerre functions, Whittaker vectors, holomorphic discrete series}

\maketitle

\begin{abstract}
We study degenerate Whittaker vectors in scalar type holomorphic discrete series representations of tube type Hermitian 
Lie groups and their analytic continuation. In four different realizations, the bounded domain picture, the tube 
domain picture, the $L^2$-model and the Fock model, we find their explicit $K$-type expansions. 
The coefficients are expressed in terms of the generalized Laguerre functions on the corresponding 
symmetric cone, and we relate the $K$-type expansions to the formula for the generating function of the
 Laguerre polynomials and to their recurrence relations.
\end{abstract}

%\setcounter{tocdepth}{1}
%\tableofcontents

\section*{Introduction}

For a Lie group $G$, the study of its unitary representations $\pi: G \rightarrow U({\calH})$ has roughly
speaking two aspects, namely the question of (1) classifying the irreducible ones up to equivalence, and
(2) finding models of these, including concrete Hilbert spaces ${\calH}$ and unitary actions of $G$ here.
For (1), great progress has been achieved, in particular for reductive groups $G$, and when
$G$ is of Hermitian type, the work of Harish-Chandra on the holomorphic discrete series gave
very explicit Hilbert spaces ${\calH}$ of holomorphic sections of vector bundles over the
corresponding Riemannian symmetric space $G/K$. Such spaces
are reproducing kernel Hilbert spaces, and together with related $L^2$-spaces they are among the
most natural concrete Hilbert spaces. The question (2) often relates to classical harmonic analysis
in the sense of involving classical special functions, and it is this aspect which we shall study in
the present paper, continuing our previous work \cite{FOO22}. Hence, we shall work with
four different models of the scalar holomorphic discrete series of groups of tube type, and
study degenerate Whittaker vectors in each, and also the integral operators exhibiting the intertwining
operators; in particular, we find explicit expansions of the Whittaker vectors in terms of orthonormal bases of $K$-types in each ${\calH}$. This is in the same spirit as the celebrated Segal-Bargmann transform, where the corresponding
integral kernel is essentially the generating function for the Hermite functions. In the
same way, the Whittaker vectors we find play a role of generating functions of generalized Laguerre polynomials on symmetric cones.

The starting point of this work was indeed the formula for the generating function of the classical Laguerre polynomials $L_n^\alpha(x)$, or rather for the corresponding Laguerre functions $\ell_n^\alpha(x)=e^{-x}L_n^\alpha(2x)$:
\begin{equation}
	\sum_{n=0}^\infty\ell_n^\alpha(x)t^n = (1-t)^{-\alpha-1}e^{-x\frac{1+t}{1-t}}.\label{eq:GenFctClassicalLaguerre}
\end{equation}
This formula appeared in a paper of Kostant~\cite[formula (5.15)]{Kos00} in the context of the analytic continuation of the holomorphic discrete series of the universal covering group of $\SL(2,\RR)$. (Note the slightly different normalization of the Laguerre polynomials in \cite{Kos00}, see Remark~\ref{rem:DefLaguerre} for our notation which seems to be standard in the mathematics literature, for instance in \cite[Chapter 6.2]{AAR99}.) More precisely, Kostant considers a realization of these representations $\pi_\nu$ on $L^2(\RR_+)$ and uses \eqref{eq:GenFctClassicalLaguerre} to show that the Laguerre functions are eigenfunctions of the Hankel transform, and then concludes that the Hankel transform is essentially the action of a certain group element in the representation.

On the other hand, the right hand side of \eqref{eq:GenFctClassicalLaguerre} has a striking similarity with the Whittaker vectors we found in \cite{FOO22} in a different realization of the analytic continuation of the representation $\pi_\nu$, namely the bounded domain realization on a space $\calH^2_\nu(D)$ of holomorphic functions on the unit disc $D\subseteq\CC$. Although Kostant also studies Whittaker vectors on $\pi_\nu$ in his paper \cite{Kos00}, he does not connect the formula \eqref{eq:GenFctClassicalLaguerre} to Whittaker vectors on $\calH^2_\nu(D)$. In this paper, we obtain such a relation in the more general setting of scalar type unitary highest weight representations of tube type Hermitian groups.\\

Let $G$ be a connected simple Hermitian Lie group. The scalar type unitary highest weight representations $\pi_\nu$ of the universal cover of $G$ ($\nu$ a complex parameter) can be realized in various different ways (see Section~\ref{sec:ScalarHolDS} for details). The most prominent realization is on a weighted Bergmann space $\calH_\nu^2(D)$ of holomorphic functions on the associated bounded symmetric domain $D\simeq G/K$, where $K\subseteq G$ is a maximal compact subgroup. If $G$ is of tube type, there is a Cayley transform that gives a biholomorphic equivalence between $D$ and a tube domain $T_\Omega=V+i\Omega\subseteq V_\CC$, where $V$ is a real vector space and $\Omega\subseteq V$ a symmetric cone. Hence, $\calH_\nu^2(D)\simeq\calH_\nu^2(T_\Omega)$, and we obtain a realization of $\pi_\nu$ on $\calH_\nu^2(T_\Omega)$. The vector space $V$ has a natural structure of a Euclidean Jordan algebra and the cone $\Omega$ can be viewed as the Riemannian symmetric space $L/(K\cap L)$ of the Levi subgroup $L$ of the Siegel parabolic subgroup of $G$. Moreover, a generalization of the classical Laplace transform gives a unitary isomorphism between $\calH_\nu^2(T_\Omega)$ and a weighted $L^2$-space $L^2_\nu(\Omega)$ on the cone $\Omega$, thus providing yet another realization of $\pi_\nu$, the so-called \emph{$L^2$-model}. Finally, in a more recent work \cite{Moe13}, the first author found a generalization of the classical Segal--Bargmann transform, establishing an isomorphism between $L^2_\nu(\Omega)$ and a Fock space $\calF_\nu^2(V_\CC)$ of holomorphic functions on $V_\CC$.

The $K$-types in $\pi_\nu$ can be parameterized by ${\bf m}=(m_1,\ldots,m_r)\in\ZZ^r$ with $m_1\geq\ldots\geq m_r\geq0$, where $r=\rk(G/K)$. Each $K$-type contains a unique (up to scalar multiples) $(K\cap L)$-invariant vector. In the realization on $\calH_\nu^2(D)$, this vector is a spherical polynomial $\Phi_{\bf m}(w)$ on $V_\CC$, and in the realization on $L^2_\nu(\Omega)$ it is a generalized Laguerre function $\ell_{\bf m}^\nu(u)$ (see Sections~\ref{sec:SphericalPolys} and \ref{sec:GenLaguerre} for details). Note that for $G=\SL(2,\RR)$ we have $V=\RR$, $\Phi_{\bf m}(w)=w^{m_1}$ and $\ell_{\bf m}^\nu(u)=m_1!\cdot\ell_{m_1}^{\nu-1}(u)$, the classical Laguerre functions.

The formula \eqref{eq:GenFctClassicalLaguerre} has a generalization to this context:
\begin{equation}
	\sum_{{\bf m}\geq0}\frac{d_{\bf m}}{(\frac{n}{r})_{\bf m}}\Phi_{\bf m}(w)\ell_{\bf m}^\nu(u) = \Delta(e-w)^{-\nu}\int_{K\cap L} e^{-(ku|(e+w)(e-w)^{-1})}\,dk,\label{eq:GenFctLaguerreFctIntro}
\end{equation}
where $d_{\bf m}$ is the dimension of the $K$-type associated with ${\bf m}$, $(\frac{n}{r})_{\bf m}$ a multivariate Pochhammer symbol where $n=\dim V$, $\Delta(z)$ the Jordan algebra determinant on $V_\CC$, $(z|w)$ the Jordan algebra trace form and $z^{-1}$ the Jordan algebra inverse of $z\in V_\CC$ (see Section~\ref{sec:Preliminaries} for the precise definitions). This formula was stated in \cite[Exercise 3 in Chapter XV]{FK94}, and we give two different proofs of it in Section~\ref{sec:GenFctLaguerre}. The first proof follows the hints in \cite{FK94} and is more or less a direct computation (see Section~\ref{sec:FirstProof}), and the second proof uses the intertwining operator $L^2_\nu(\Omega)\to\calH_\nu^2(D)$ given by composing Laplace and Cayley transform and mapping $\ell_{\bf m}^\nu$ to a multiple of $\Phi_{\bf m}$ (see Section~\ref{sec:SecondProof}). The second proof can also be applied to the Segal--Bargmann transform $L^2_\nu(\Omega)\to\calF_\nu^2(V_\CC)$ and yields new identities for the multivariate $I$-Bessel function on the cone $\Omega$ (see Section~\ref{sec:SecondProofVar}).

Now, for $u=te$, $t\in\RR$ and $e$ the identity element of the Jordan algebra $V$, the right hand side of \eqref{eq:GenFctLaguerreFctIntro}, as a function of $w\in D$, is a degenerate Whittaker vector in the realization $\calH_\nu^2(D)$ with respect to the unipotent radical $N$ of the Siegel parabolic subgroup. Hence, the left hand side of \eqref{eq:GenFctLaguerreFctIntro} can be viewed as the $K$-type expansion of the Whittaker vector. In Section~\ref{sec:KTypeExpansion} we provide a different proof of this expansion, starting with the expansion of the Whittaker vector in the $L^2$-model and mapping it to the other realizations, thus establishing $K$-type expansion formulas for Whittaker vectors in all four realizations. In all cases, the coefficients of this expansion contain the factor $\ell_{\bf m}^\nu(te)$, the special value of the generalized Laguerre functions at $u=te$.

Finally, in Section~\ref{sec:RecurrenceRelations} we explore the relation between the $K$-type expansion of Whittaker vectors and the recurrence relations for the Laguerre functions obtained in \cite{ADO06}. In rank $r=1$, the recurrence relations turn out to be equivalent to the Whittaker property for the expansion.

\section{Preliminaries}\label{sec:Preliminaries}

We recall the construction of Hermitian Lie groups of tube type via Euclidean Jordan algebras as well as several special functions on Euclidean Jordan algebras that play a role in the construction of different models of unitary highest weight representations. The main reference is \cite{FK94}, and for the last section also \cite{Moe13}.

\subsection{Euclidean Jordan algebras and related groups}

Let $V$ be a simple Euclidean Jordan algebra of dimension $n$ and rank $r$, $\Omega\subseteq V$ the symmetric cone of invertible squares and $T_\Omega=V+i\Omega\subseteq V_\CC$ the corresponding tube domain. The group $G=G(T_\Omega)$ of biholomorphic automorphisms of $T_\Omega$ is a simple Hermitian Lie group of tube type generated by the group of translations
$$ N = \{n_v:v\in V\}, \qquad \mbox{where }n_v:T_\Omega\to T_\Omega,\,z\mapsto z+v, $$
the group of linear automorphisms
$$ L = G(\Omega) = \{g\in\GL(V):g\Omega=\Omega\}, $$
and the inversion
$$ j:T_\Omega\to T_\Omega, \quad j(z)=-z^{-1}. $$

The map $\theta:G\to G,\,g\mapsto jgj^{-1}$ defines a Cartan involution on $G$ and we write $K=\{g\in G:\theta(g)=g\}$ for the corresponding maximal compact subgroup. Since $\theta$ leaves $L$ invariant, the intersection $K\cap L$ is a maximal compact subgroup of $L$, and it turns out that it equals the automorphism group of the Jordan algebra $V$:
$$ K\cap L = \Aut(V) = \{g\in G(\Omega):ge=e\}. $$

The tube domain $T_\Omega$ is biholomorphically equivalent to a bounded symmetric domain $D\subseteq V_\CC$, the biholomorphic isomorphism being the Cayley transform
$$ c:D\to T_\Omega, \quad c(w)=i(e+w)(e-w)^{-1} $$
with inverse
$$ p:T_\Omega\to D, \quad p(z)=(z-ie)(z+ie)^{-1}. $$
We can therefore also think of $G$ as the automorphism group of $D$.

We further denote by $L_\CC\subseteq\GL(V_\CC)$ the complex linear group with Lie algebra $\frakl_\CC$, the complexification of the Lie algebra $\frakl$ of $L$. It contains a maximal compact subgroup $U$ with Lie algebra $(\frakk\cap\frakl)+i(\frakp\cap\frakl)$, where $\frakg=\frakk\oplus\frakp$ is the Cartan decomposition of the Lie algebra $\frakg$ of $G$ with respect to the Cartan decomposition $\theta$. Then $L_\CC$ acts on $V_\CC$ with an open dense orbit $\{z\in V_\CC:\Delta(z)\neq0\}$, and every element in this orbit can be written as $z=ux$ with $u\in U$ and $x\in\Omega\subseteq V$.

\subsection{Spherical polynomials}\label{sec:SphericalPolys}

Denote by $L:V\to\End(V),\,L(x)y=xy$ the multiplication map and by $P:V\to\End(V),\,P(x)=2L(x)^2-L(x^2)$ the quadratic representation of $V$. The linear polynomial $\tr(x)=\frac{r}{n}\Tr L(x)$ on $V$ is called the \emph{Jordan trace}, and it gives rise to an inner product $(x|y)=\tr(xy)$ on $V$ which extends to a bilinear form $(z|w)$ on $V_\CC$. Further, there exists a unique polynomial $\Delta(x)$ of degree $r$ with $\Delta(e)=1$, the \emph{Jordan determinant}, such that
$$ \Delta(x)^{\frac{2n}{r}}=\Det P(x). $$
The Jordan determinant is a relative invariant for the action of the group $L$, i.e. there exists a character $\chi:L\to\RR_+$ such that
$$ \Delta(gx) = \chi(g)\Delta(x) \qquad (g\in L,x\in V). $$

Let $c_1,\ldots,c_r\in V$ be a maximal set of pairwise orthogonal primitive idempotents such that $c_1+\cdots+c_r=e$. If we denote by $V(c_j,\lambda)$ the eigenspace of $L(c_j)$ to the eigenvalue $\lambda$, then we have the \emph{Peirce decomposition}
$$ V=\bigoplus_{1\leq i\leq j\leq r}V_{ij}, $$
where $V_{ii}=V(c_i,1)=\RR c_i$ and $V_{ij}=V(c_i,\frac{1}{2})\cap V(c_j,\frac{1}{2})$ for $i<j$. The subspaces $V_{ij}$ for $i<j$ have a common dimension $d$, and we can write
$$ n = r + r(r-1)\frac{d}{2}. $$

For each $1\leq k\leq r$, the element $e_k=c_1+\cdots+c_k$ is an idempotent in $V$ and the eigenspace $V(e_k,1)$ of $L(e_k)$ to the eigenvalue $1$ is a Jordan algebra of rank $k$ with identity element $e_k$. Let 
\[\Delta_k(x)=\Delta_{V(e_k,1)}(P_kx),\]
where $\Delta_{V(e_k,1)}$ denotes the Jordan determinant of $V(e_k,1)$ and $P_k:V\to V(e_k,1)$ the orthogonal projection. Then $\Delta_k(x)$ is a polynomial on $V$ of degree $k$ with $\Delta_k(e)=1$, and $\Delta_r(x)=\Delta(x)$.

For ${\bf m}\in\ZZ^r$ we write ${\bf m}\geq0$ if
$$ m_1\geq m_2\geq\ldots\geq m_r\geq0. $$
In this case,
$$ \Delta_{\bf m}(x) = \Delta_1(x)^{m_1-m_2}\cdots\Delta_{r-1}(x)^{m_{r-1}-m_r}\Delta_r(x)^{m_r} $$
defines a polynomial on $V$ of degree $|{\bf m}|=m_1+\cdots+m_r$ with $\Delta_{\bf m}(e)=1$. Each of these polynomials $\Delta_{\bf m}(x)$ generates a subspace $\calP_{\bf m}(V)$ of polynomials on $V$ under the action of the group $V$ by $(g\cdot p)(x)=p(g^{-1}x)$. Let $d_{\bf m}$ denote the dimension of $\calP_{\bf m}(V)$. The spaces $\calP_{\bf m}(V)$ turn out to be irreducible representations of $L$, and they all contain a unique $(K\cap L)$-invariant polynomial $\Phi_{\bf m}(x)$ normalized such that $\Phi_{\bf m}(e)=1$. More precisely, we have the integral formula
$$ \Phi_{\bf m}(x) = \int_{K\cap L}\Delta_{\bf m}(kx)\,dk, $$
where $dk$ is the normalized Haar measure on $K\cap L$. The \emph{Hua--Kostant--Schmid decomposition} asserts that the space $\calP(V)$ of all polynomials on $V$ decomposes into the direct sum of all subspaces $\calP_{\bf m}(V)$.

The dimension $d_{\bf m}$ can be computed explicitly. By \cite[Chapter XIV.4]{FK94} we have
$$ d_{\bf m} = \frac{c(\rho)c(-\rho)}{c(\rho-{\bf m})c({\bf m}-\rho)}, $$
where
$$ c(\lambda) = \prod_{1\leq i<j\leq r}\frac{B(\lambda_j-\lambda_i,\frac{d}{2})}{B((j-i)\frac{d}{2},\frac{d}{2})} = \prod_{i<j}\frac{\Gamma(\lambda_j-\lambda_i)\Gamma((j-i+1)\frac{d}{2})}{\Gamma(\lambda_j-\lambda_i+\frac{d}{2})\Gamma((j-i)\frac{d}{2})} $$
is the $c$-function of the cone $\Omega$ and $\rho_j=(2j-r-1)\frac{d}{4}$. Hence,
\begin{align*}
	d_{\bf m} &= \prod_{i<j}\frac{((i-j+1)\frac{d}{2})_{m_j-m_i}(m_i-m_j+(j-i)\frac{d}{2})_{m_j-m_i}}{((i-j)\frac{d}{2})_{m_j-m_i}(m_i-m_j+(j-i+1)\frac{d}{2})_{m_j-m_i}}\\
	&= \prod_{i<j}\frac{((i-j)\frac{d}{2}+m_j-m_i)(m_j-m_i+(i-j-1)\frac{d}{2}+1)_{d-1}}{((i-j)\frac{d}{2})((i-j-1)\frac{d}{2}+1)_{d-1}}.
\end{align*}
In particular,
\begin{equation}
	\frac{d_{{\bf m}+e_j}}{d_{\bf m}} = \prod_{i\neq j}\frac{((i-j)\frac{d}{2}+m_j-m_i+1)(m_j-m_i+(i-j+1)\frac{d}{2})}{((i-j)\frac{d}{2}+m_j-m_i)(m_j-m_i+(i-j-1)\frac{d}{2}+1)}.\label{eq:QuotientDimensions}
\end{equation}

\subsection{The Gamma function}

For $\nu>\frac{n}{r}-1$ the integral
$$ \Gamma_\Omega(\nu) = \int_\Omega e^{-\tr(x)}\Delta(x)^{\nu-\frac{n}{r}}\,dx $$
converges absolutely and can be evaluated in terms of the classical Gamma function:
$$ \Gamma_\Omega(\nu) = (2\pi)^{\frac{n-r}{2}}\prod_{j=1}^r\Gamma\left(\nu-(j-1)\frac{d}{2}\right). $$
The Gamma function can be used to define the Pochhammer symbol
\begin{equation}
	(\nu)_{\bf m} = \frac{\Gamma_\Omega(\nu+{\bf m})}{\Gamma_\Omega(\nu)} = \prod_{j=1}^r\left(\nu-(j-1)\frac{d}{2}\right)_{m_j},\label{eq:PochhammerFormula}
\end{equation}
where $(\alpha)_n=\alpha(\alpha+1)\cdots(\alpha+n-1)$ denotes the classical Pochhammer symbol.

\subsection{The Wallach set}\label{sec:WallachSet}

We define the generalized Riesz distributions $R_\nu$ on $V$ by
$$ \langle R_\nu,\varphi\rangle = \frac{2^n}{\Gamma_\Omega(\nu)}\int_\Omega\varphi(u)\Delta(2u)^{\nu-\frac{n}{r}}\,du \qquad (\varphi\in\calS(V)). $$
By \cite[Theorems VII.2.2 and VII.3.1]{FK94}, $R_\nu$ is a nowhere vanishing holomorphic family of distributions and it is a positive measure if and only if $\nu$ is contained in the so-called \emph{Wallach set}
$$ \calW = \left\{0,\frac{d}{2},\ldots,(r-1)\frac{d}{2}\right\}\cup\left((r-1)\frac{d}{2},\infty\right). $$
For $\nu\in((r-1)\frac{d}{2},\infty)$, the support of $R_\nu$ is $\overline{\Omega}$ while for $\nu=k\frac{d}{2}$ the support is the subset of the boundary of $\Omega$ consisting of elements of rank at most $k$.

\subsection{Generalized Laguerre polynomials and Laguerre functions}\label{sec:GenLaguerre}

For every ${\bf m}\geq0$ the polynomial $\Phi_{\bf m}(e+x)$ is $(K\cap L)$-invariant and contained in $\bigoplus_{|{\bf n}|\leq|{\bf m}|}\calP_{\bf n}(V)$. We can therefore write
$$ \Phi_{\bf m}(e+x) = \sum_{|{\bf n}|\leq|{\bf m}|}{{\bf m}\choose{\bf n}}\Phi_{\bf n}(x) $$
for some coefficients ${{\bf m}\choose{\bf n}}\in\CC$ called \emph{generalized binomial coefficients}. Using these, we define the \emph{generalized Laguerre polynomials}, see \cite[p. 343]{FK94}
$$ L_{\bf m}^\nu(x) = (\nu)_{\bf m}\sum_{|{\bf n}|\leq|{\bf m}|}{{\bf m}\choose{\bf n}}\frac{1}{(\nu)_{\bf n}}\Phi_{\bf n}(-x) $$
and the \emph{generalized Laguerre functions}
$$ \ell_{\bf m}^\nu(x) = e^{-\tr(x)}L_{\bf m}^\nu(2x). $$

\begin{remark}\label{rem:DefLaguerre}
	For $r=1$ we have $L_{\bf m}^\nu(x)=m_1!\cdot L_{m_1}^{\nu-1}(x)$, where the classical Laguerre polynomial $L_n^\alpha(x)$ is defined by
	$$ L_n^\alpha(x) = \sum_{k=0}^n(-1)^k{n+\alpha\choose n-k}\frac{x^k}{k!}. $$
\end{remark}

\subsection{Generalized Bessel functions}
Denote by $g^*$ the adjoint of $g\in L_\mathbb {C}$ with respect to the inner product $(z,w)\mapsto(z|\overline{w})$ on $V_\CC$. There exists a unique positive definite polynomial $\Phi_{\bf m}(z,w)$ on $V_\CC\times V_\CC$ which is holomorphic in $z$ and antiholomorphic in $w$ such that 
\[\Phi_{\bf m}(gz,w)=\Phi_{\bf m}(z,g^*w)\quad\text{for all } g\in L_\CC\]
and
\[\Phi_{\bf m}(z,e)=\Phi_{\bf m}(z),\]
see \cite[Prop. XII.2.4]{FK94}. The generalized $I$-Bessel function is defined by
$$ \calI_\nu(z,w) = \sum_{\bf m}\frac{d_{\bf m}}{(\frac{n}{r})_{\bf m}(\nu)_{\bf m}}\Phi_{\bf m}(z,w) \qquad (z,w\in V_\CC). $$
The sum converges for $\nu>(r-1)\frac{d}{2}$ and all $z,w\in V_\CC$, and one can extend the definition to $\nu=k\frac{d}{2}\in\calW$ by assuming $z,w$ to be of rank at most $k$. A special case is the one-variable Bessel function
$$ \calI_\nu(z) = \calI_\nu(z,e) = \sum_{\bf m}\frac{d_{\bf m}}{(\frac{n}{r})_{\bf m}(\nu)_{\bf m}}\Phi_{\bf m}(z) \qquad (z\in V_\CC). $$

We further define the generalized $K$-Bessel function on $\Omega$ by the convergent integral
$$ \calK_\nu(x) = \int_\Omega e^{-\tr(u^{-1})-(x|u)}\Delta(u)^{\nu-\frac{2n}{r}}\,du = \int_\Omega e^{-\tr(v)-(x|v^{-1})}\Delta(v)^{-\nu}\,dv. $$
For $\nu=k\frac{d}{2}\in\calW$, the $K$-Bessel function has a well-defined restriction to the boundary orbit consisting of elements of rank $k$ (see \cite[Proposition 3.10]{Moe13}).

\begin{remark} Proposition XII.2.4 in \cite{FK94} shows more than mentioned above. In fact the function
$d_{\bf m} \Phi_{\bf m} (z,w)$ is the reproducing kernel for the finite dimensional space $\calP_{\bf m}$ with respect to a certain $L^2$-inner product.
\end{remark}

\section{Scalar type holomorphic discrete series representations}\label{sec:ScalarHolDS}

The universal cover $\widetilde{G}$ of $G$ has a family of irreducible unitary representations called \emph{scalar type holomorphic discrete series representations}, for which we now recall four different realizations (see \cite[Chapters XIII.1 and XV.4]{FK94} and \cite[Sections 2, 4 and 5]{Moe13} for details). All realizations can be extended holomorphically in the parameter, giving rise to a slightly larger family of representations, the \emph{scalar type unitary highest weight representations}.

\subsection{The tube domain realization}

For $\nu>\frac{2n}{r}-1$ we let
$$ \calH^2_\nu(T_\Omega) = \left\{F\in\calO(T_\Omega):\|F\|_{\calH^2_\nu(T_\Omega)}^2=d_\nu\int_{T_\Omega}|F(z)|^2\Delta(y)^{\nu-\frac{2n}{r}}\,dx\,dy<\infty\right\}, $$
where
$$ d_\nu = \frac{1}{(4\pi)^n}\frac{\Gamma_\Omega(\nu)}{\Gamma_\Omega(\nu-\frac{n}{r})}, $$
and introduce a family $(T_\nu,\calH^2_\nu(T_\Omega))$ of representations of $\widetilde{G}$ by
$$ T_\nu(g)F(z) = \mu_\nu(g^{-1},z)F(g^{-1}z), $$
where $\mu_\nu(g,z)=\chi(\mu(g,z))^{-\frac{\nu}{2}}=(\det\mu(g,z))^{-\frac{r\nu}{2n}}$ with
$$ \mu(g,z) = \left(\frac{\partial(g\cdot z)}{\partial z}\right)^{-1}. $$
This action can be made explicit for the generators of $G$:
\begin{align*}
	T_\nu(n_u)F(z) &= F(z-u) && u\in V,\\
	T_\nu(g)F(z) &= \chi(g)^{-\frac{\nu}{2}}F(g^{-1}z), && g\in L,\\
	T_\nu(j)F(z) &= \Delta(z)^{-\nu}F(-z^{-1}).
\end{align*}

The restriction of $T_\nu$ to $K$ decomposes  into a multiplicity-free direct sum of $K$-types
$$ \calH_\nu^2(T_\Omega) = \bigoplus_{{\bf m}\geq0}\calH_\nu^2(T_\Omega)_{\bf m}. $$
In each $K$-type $\calH_\nu^2(T_\Omega)_{\bf m}$, the subspace of $(K\cap L)$-invariant vectors is one-dimensional and spanned by
\begin{equation}
	\Psi_{\bf m}^\nu(z) = \Delta\left(\frac{z+ie}{2i}\right)^{-\nu}\Phi_{\bf m}\left(\frac{z-ie}{z+ie}\right),\label{eq:DefPsi}
\end{equation}
and the norm of $\Psi_{\bf m}^\nu$ is given by
$$ \|\Psi_{\bf m}^\nu\|_{\calH_\nu^2(T_\Omega)}^2 = \frac{(\frac{n}{r})_{\bf m}}{d_{\bf m}(\nu)_{\bf m}}. $$

\subsection{The bounded domain realization}

For $\nu>\frac{2n}{r}-1$ let
$$ \calH_\nu^2(D) = \left\{f\in\calO(D):\|f\|_{\calH_\nu^2(D)}^2=c_\nu\int_D|f(w)|^2h(w)^{\nu-\frac{2n}{r}}\,dw<\infty\right\}. $$
Here, $h(w)=h(w,w)$ with $h(z,w)$ the unique polynomial holomorphic in $z\in V_\CC$ and antiholomorphic in $w\in V_\CC$ such that $h(x,x)=\Delta(e-x^2)$ for $x\in V$, and the constant $c_\nu$ is chosen such that the constant function $f(w)=1$ has norm $1$:
$$ c_\nu = \frac{1}{\pi^n}\frac{\Gamma_\Omega(\nu)}{\Gamma_\Omega(\nu-\frac{n}{r})}. $$

The Cayley transform
$$ \gamma_\nu:\calH_\nu^2(T_\Omega)\to\calH_\nu^2(D),\quad \gamma_\nu F(w) = \Delta(e-w)^{-\nu}F\left(i\frac{e+w}{e-w}\right) $$
is an isometric isomorphism with inverse (see \cite[Proposition XIII.1.3]{FK94})
$$ \gamma_\nu^{-1}f(z) = \Delta\left(\frac{z+ie}{2i}\right)^{-\nu}f\left(\frac{z-ie}{z+ie}\right). $$
This gives rise to a model $D_\nu$ of the representation $T_\nu$ on $\calH_\nu^2(D)$ defined by
$$ D_\nu(g) = \gamma_\nu\circ T_\nu(g)\circ\gamma_\nu^{-1} \qquad (g\in\widetilde{G}). $$

The decomposition of $D_\nu$ into $K$-types will be written as
$$ \calH_\nu^2(D) = \bigoplus_{{\bf m}\geq0}\calH_\nu^2(D)_{\bf m}, $$
where $\calH_\nu^2(D)_{\bf m}=\calP_{\bf m}$. Hence, the subspace of $(K\cap L)$-invariant vectors in $\calH_\nu^2(D)_{\bf m}$ is spanned by $\Phi_{\bf m}=\gamma_\nu\Psi_{\bf m}^\nu$ with norm (see \cite[Proposition XIII.2.2]{FK94})
\begin{equation}
	\|\Phi_{\bf m}\|^2_{\calH_\nu^2(D)} = \frac{(\frac{n}{r})_{\bf m}}{d_{\bf m}(\nu)_{\bf m}}.\label{eq:NormSphericalPolynomials}
\end{equation}

\subsection{The $L^2$-model}

For $\nu>\frac{n}{r}-1$ let
$$ L^2_\nu(\Omega) = L^2(\Omega,\Delta(2u)^{\nu-\frac{n}{r}}\,du), $$
equipped with the norm
$$ \|\varphi\|_{L^2_\nu(\Omega)}^2 = \frac{2^n}{\Gamma_\Omega(\nu)}\int_\Omega|\varphi(u)|^2\Delta(2u)^{\nu-\frac{n}{r}}\,du. $$
By \cite[Theorem XIII.1.1]{FK94} the Laplace transform
$$ \calL_\nu:L^2_\nu(\Omega)\to\calH_\nu^2(T_\Omega), \quad \calL_\nu\varphi(z) = 
\frac{2^n}{\Gamma_\Omega(\nu)}\int_\Omega e^{i(z|u)}\varphi(u)\Delta(2u)^{\nu-\frac{n}{r}}\,du $$
is an isometric isomorphism. It can be used to realize the representation $T_\nu$ on $L^2_\nu(\Omega)$:
$$ L_\nu(g) = \calL_\nu^{-1}\circ T_\nu(g)\circ\calL_\nu \qquad (g\in\widetilde{G}). $$

In this realization, the statements of Section~\ref{sec:WallachSet} can be used to extend the family $L_\nu$ to all $\nu\in\calW$. This is the \emph{analytic continuation of the scalar type holomorphic discrete series}, or the \emph{scalar type unitary highest weight representations}. Using Laplace and Cayley transform, the realizations $\calH_\nu^2(T_\Omega)$ and $\calH_\nu^2(D)$ can also be defined for $\nu\in\calW$.

The subspace $L^2_\nu(\Omega)^{K\cap L}$ of $(K\cap L)$-invariant vectors is spanned by the 
Laguerre functions $\ell_{\bf m}^\nu$, and by \cite[Proposition XV.4.2]{FK94} we have
\begin{equation}
	\calL_\nu\ell_{\bf m}^\nu = (\nu)_{\bf m}\Psi_{\bf m}^\nu.\label{eq:LaplaceOfLaguerre}
\end{equation}
Hence, their norm is given by (see \cite[Corollary XV.4.2]{FK94})
\begin{equation}
	\|\ell_{\bf m}^\nu\|_{L^2_\nu(\Omega)}^2 = \frac{(\frac{n}{r})_{\bf m}(\nu)_{\bf m}}{d_{\bf m}}.\label{eq:NormLaguerreFunctions}
\end{equation}

\subsection{The Fock model}

In \cite{Moe13} yet another model $F_\nu$ of the scalar type holomorphic discrete series was constructed, the \emph{Fock model}. It is realized on the Hilbert space
$$ \calF_\nu^2(V_\CC) = \left\{F\in\calO(V_\CC):\frac{1}{2^{3r\nu}\Gamma_\Omega(\frac{n}{r})}\int_{V_\CC}|F(z)|^2\omega_\nu(z)\,d\mu_\nu(z)<\infty\right\}, $$
where $\omega_\nu(ux^{\frac{1}{2}})=\calK_\nu(\frac{x}{4})$ ($u\in U$, $x\in\Omega$) with $\calK_\nu$ the $K$-Bessel function on $\Omega$, and
$$ \int_{V_\CC}\varphi(z)\,d\mu_\nu(z) = \frac{2^n}{\Gamma_\Omega(\nu)}\int_U\int_\Omega\varphi(ux^{\frac{1}{2}})\Delta(2x)^{\nu-\frac{n}{r}}\,dx\,du. $$
Here, $x^{\frac{1}{2}}$ denotes the unique square root of an element $x\in\Omega$.

The unitary intertwining operator $\BB_\nu:L^2_\nu(\Omega)\to\calF_\nu^2(V_\CC)$ is the Segal--Bargmann transform
$$ \BB_\nu\varphi(z) = \frac{2^n}{\Gamma_\Omega(\nu)}e^{-\frac{1}{2}\tr(z)}\int_\Omega\calI_\nu(z,x)e^{-\tr(x)}\varphi(x)\Delta(2x)^{\nu-\frac{n}{r}}\,dx, $$
where $\calI_\nu(z,w)$ denotes the $I$-Bessel function. It satisfies
\begin{equation}
	\BB_\nu\ell_{\bf m}^\nu = \left(-\frac{1}{2}\right)^{|{\bf m}|}\Phi_{\bf m}.\label{eq:SBonLaguerre}
\end{equation}

\section{The generating function for the Laguerre polynomials}\label{sec:GenFctLaguerre}

Recall the spherical polynomials $\Phi_{\bf m}(w)$ and the Laguerre functions $\ell_{\bf m}^\nu(u)$. In this section we give two different proofs for the following generating function formula:

\begin{theorem}
	For $w\in D$, $u\in\overline{\Omega}$ and $\nu>(r-1)\frac{d}{2}$ we have
	\begin{equation}
		\sum_{{\bf m}\geq0}\frac{d_{\bf m}}{(\frac{n}{r})_{\bf m}}\Phi_{\bf m}(w)\ell_{\bf m}^\nu(u) = \Delta(e-w)^{-\nu}\int_{K\cap L} e^{-(ku|(e+w)(e-w)^{-1})}\,dk.\label{eq:GenFctLaguerreFct}
	\end{equation}
	This identity is still valid for $\nu=k\frac{d}{2}\in\calW$ if $u\in\partial\Omega$ is of rank at most $k$.
\end{theorem}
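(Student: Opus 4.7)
The plan is to view both sides of \eqref{eq:GenFctLaguerreFct} as two computations of the composition $\gamma_\nu\circ\calL_\nu:L^2_\nu(\Omega)\to\calH^2_\nu(D)$ applied to one and the same $(K\cap L)$-invariant object on $\Omega$. Combining \eqref{eq:LaplaceOfLaguerre} with $\gamma_\nu\Psi_{\bf m}^\nu=\Phi_{\bf m}$ gives
\[
	\gamma_\nu\calL_\nu\ell_{\bf m}^\nu = (\nu)_{\bf m}\,\Phi_{\bf m},
\]
so I will construct a single distribution $\psi_u$ on $\Omega$ whose Laguerre expansion reproduces the coefficients of the LHS and whose image under $\gamma_\nu\calL_\nu$ can be computed directly to give the RHS.

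Concretely, take $\psi_u$ to be the $(K\cap L)$-invariant orbital measure on $(K\cap L)\cdot u\subseteq\overline{\Omega}$, normalized against the weight of $L^2_\nu(\Omega)$ so that
\[
	\int_\Omega f(v)\,\psi_u(v)\,d\mu_\nu(v) = \int_{K\cap L} f(ku)\,dk,
\]
where $d\mu_\nu(v)=\tfrac{2^n}{\Gamma_\Omega(\nu)}\Delta(2v)^{\nu-\frac{n}{r}}\,dv$ is the measure appearing in the Laplace transform. Pairing $\psi_u$ against the orthogonal basis $\{\ell_{\bf m}^\nu\}$ of $L^2_\nu(\Omega)^{K\cap L}$ and dividing by the squared norms \eqref{eq:NormLaguerreFunctions} yields the formal Laguerre expansion
\[
	\psi_u = \sum_{{\bf m}\geq 0}\frac{d_{\bf m}\,\ell_{\bf m}^\nu(u)}{(\frac{n}{r})_{\bf m}(\nu)_{\bf m}}\,\ell_{\bf m}^\nu,
\]
whose image under $\gamma_\nu\calL_\nu$, computed termwise, is precisely the LHS of \eqref{eq:GenFctLaguerreFct}. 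On the other hand, evaluating $\calL_\nu\psi_u$ directly from the defining orbital integral collapses it to
\[
	\calL_\nu\psi_u(z) = \int_{K\cap L} e^{i(z|ku)}\,dk,
\]
and the Cayley substitution $z=i(e+w)(e-w)^{-1}$ together with the prefactor $\Delta(e-w)^{-\nu}$ rewrites $\gamma_\nu\calL_\nu\psi_u(w)$ as the RHS. Equating the two expressions for $\gamma_\nu\calL_\nu\psi_u$ establishes \eqref{eq:GenFctLaguerreFct}.

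The main obstacle is making this rigorous: $\psi_u$ is only a distribution, so the termwise interchange of $\gamma_\nu\calL_\nu$ with the infinite Laguerre sum has to be justified. My plan is to first verify that the series on the LHS converges absolutely and locally uniformly in $w\in D$, using standard size estimates for $\Phi_{\bf m}$ on $D$, for $\ell_{\bf m}^\nu(u)$, and for the dimension ratio $d_{\bf m}/(\frac{n}{r})_{\bf m}$. Since both sides are then holomorphic in $w$, the identity can be reduced to equality of $(K\cap L)$-equivariant coefficients: pairing the RHS with $\Phi_{\bf m}$ in $\calH^2_\nu(D)$ and transporting through $(\gamma_\nu\calL_\nu)^{-1}$ yields a pairing with $\ell_{\bf m}^\nu/(\nu)_{\bf m}$ in $L^2_\nu(\Omega)$ that equals $\ell_{\bf m}^\nu(u)/(\nu)_{\bf m}$ by the defining property of $\psi_u$, and the numerical prefactors line up via \eqref{eq:NormSphericalPolynomials}. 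A cleaner alternative is to regularize $\psi_u$ to an honest element of $L^2_\nu(\Omega)$ (e.g.\ by smoothing the orbital measure in the $L$-direction), apply $\gamma_\nu\calL_\nu$ rigorously, and pass to the limit. Finally, the extension to the Wallach point case $\nu=kd/2$ with $u$ of rank at most $k$ follows by analytic continuation in $\nu$, since all ingredients (Laguerre basis, Riesz weight, orbital integral, spherical polynomials) extend to these values as explained in Section~\ref{sec:WallachSet}.
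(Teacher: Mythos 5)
Your argument is correct and is essentially the paper's second proof (Section~\ref{sec:SecondProof}): there the identity is obtained by computing the integral kernel of $\gamma_\nu\circ\calL_\nu$ on $(K\cap L)$-invariants in two ways, once via the orthogonal bases $\ell_{\bf m}^\nu\mapsto(\nu)_{\bf m}\Phi_{\bf m}$ and once via the explicit Laplace--Cayley integral symmetrized over $K\cap L$, which is exactly your computation of $\gamma_\nu\calL_\nu\psi_u$ since that kernel evaluated at fixed $u$ is the image of your normalized orbital measure. The convergence and Wallach-point issues you flag are treated at the same level of detail in the paper.
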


Writing $\ell_{\bf m}^\nu(u) = e^{-\tr(u)}L_{\bf m}^\nu(2u)$ and using
\begin{align*}
e^{-(ku|(e+w)(e-w)^{-1})}  & = e^{-(ku|e+2w(e-w)^{-1})} \\
&= e^{-\tr(u)}e^{-2(ku|2w(e-w)^{-1})}, 
\end{align*}
we obtain the following equivalent formula which was stated in \cite[Exercise 3 in Chapter XV]{FK94}:
\begin{equation}
	\sum_{{\bf m}\geq0}\frac{d_{\bf m}}{(\frac{n}{r})_{\bf m}}\Phi_{\bf m}(x)L_{\bf m}^\nu(u) = \Delta(e-x)^{-\nu}\int_{K\cap L} e^{-(ku|x(e-x)^{-1})}\,dk,\label{eq:GenFctLaguerrePoly}
\end{equation}

For $V=\RR$ we have $K\cap L=\{\id\}$ and we recover the generating function of the classical Laguerre polynomials~\eqref{eq:GenFctClassicalLaguerre}.

\subsection{First proof á la Faraut--Koranyi}\label{sec:FirstProof}

Following \cite[Exercise 3 in Chapter XV]{FK94}, we first show
\begin{equation}
	\sum_{{\bf m}\geq0}\frac{d_{\bf m}(\nu)_{\bf m}}{(\frac{n}{r})_{\bf m}}\Phi_{\bf m}(x)\Phi_{\bf m}(y) = \Delta(x)^{-\nu}\int_K\Delta(kx^{-1}-y)^{-\nu}\,dk.\label{eq:FKExercise1}
\end{equation}
For this, note that for fixed invertible $x\in V_\CC$, the right hand side of \eqref{eq:FKExercise1} defines a $K$-invariant analytic function $F_x(y)$. Hence, by \cite[Chapter XII.1]{FK94} it has a power series expansion
$$ F_x(y) = \sum_{{\bf m}\geq0}\frac{d_{\bf m}}{(\frac{n}{r})_{\bf m}}a_{\bf m}\Phi_{\bf m}(y) \qquad \mbox{with} \qquad a_{\bf m} = \left(\Phi_{\bf m}\left(\frac{\partial}{\partial y}\right)F_x\right)(0). $$
Using \cite[Corollary VII.1.3]{FK94} we can write
$$ F_x(y) = \frac{1}{\Gamma_\Omega(\nu)}\Delta(x)^{-\nu}\int_K\int_\Omega e^{-(kx^{-1}-y|u)}\Delta(u)^{\nu-\frac{n}{r}}\,du\,dk $$
and hence
\begin{align*}
	a_{\bf m} &= \frac{1}{\Gamma_\Omega(\nu)}\Delta(x)^{-\nu}\int_K\int_\Omega\Phi_{\bf m}(u)e^{-(kx^{-1}|u)}\Delta(u)^{\nu-\frac{n}{r}}\,du\,dk\\
	&= \frac{1}{\Gamma_\Omega(\nu)}\Delta(x)^{-\nu}\int_K\Gamma_\Omega({\bf m}+\nu)\Delta(kx^{-1})^{-\nu}\Phi_{\bf m}((kx^{-1})^{-1})\,dk
\end{align*}
by \cite[Lemma XI.2.3]{FK94}. Now,
$$ \Delta(kx^{-1})=\Delta(x^{-1})=\Delta(x)^{-1} \qquad \mbox{and} \qquad \Phi_{\bf m}((kx^{-1})^{-1})=\Phi_{\bf m}(x), $$
so
$$ a_{\bf m}=\frac{\Gamma_\Omega({\bf m}+\nu)}{\Gamma_\Omega(\nu)}\Phi_{\bf m}(x)=(\nu)_{\bf m}\Phi_{\bf m}(x) $$
and \eqref{eq:FKExercise1} follows. Next, we note that by \cite[Proposition XV.4.2]{FK94}:
\begin{equation}
	\int_\Omega e^{-(u|y)}L_{\bf m}^\nu(u)\Delta(u)^{\nu-\frac{n}{r}}\,du = \Gamma_\Omega({\bf m}+\nu)\Delta(y)^{-\nu}\Phi_{\bf m}(e-y^{-1}).\label{eq:FKExercise2}
\end{equation}
Now we finally show \eqref{eq:GenFctLaguerrePoly} by comparing the Laplace transform in $u$ of both sides. For the left hand side we obtain with \eqref{eq:FKExercise1} and \eqref{eq:FKExercise2}:
\begin{align*}
	& \int_\Omega e^{-(u|y)}\sum_{{\bf m}\geq0}\frac{d_{\bf m}}{(\frac{n}{r})_{\bf m}}\Phi_{\bf m}(x)L_{\bf m}^\nu(u)\Delta(u)^{\nu-\frac{n}{r}}\,du\\
	&\qquad= \sum_{{\bf m}\geq0}\frac{d_{\bf m}}{(\frac{n}{r})_{\bf m}}\Phi_{\bf m}(x)\Gamma_\Omega({\bf m}+\nu)\Delta(y)^{-\nu}\Phi_{\bf m}(e-y^{-1})\\
	&\qquad= \Gamma_\Omega(\nu)\Delta(y)^{-\nu}\sum_{{\bf m}\geq0}\frac{d_{\bf m}(\nu)_{\bf m}}{(\frac{n}{r})_{\bf m}}\Phi_{\bf m}(x)\Phi_{\bf m}(e-y^{-1})\\
	&\qquad= \Gamma_\Omega(\nu)\Delta(y)^{-\nu}\Delta(x)^{-\nu}\int_K\Delta(kx^{-1}-(e-y^{-1}))^{-\nu}\,dk.
\end{align*}
The Laplace transform of the right hand side of \eqref{eq:GenFctLaguerrePoly} can be evaluated using \cite[Corollary VII.1.3]{FK94}:
\begin{align*}
	& \int_\Omega e^{-(u|y)}\Delta(e-x)^{-\nu}\int_K e^{-(ku|x(e-x)^{-1})}\,dk\Delta(u)^{\nu-\frac{n}{r}}\,du\\
	&\qquad= \Delta(e-x)^{-\nu}\int_K\int_\Omega e^{-(u|y+k^{-1}(x(e-x)^{-1}))}\Delta(u)^{\nu-\frac{n}{r}}\,dy\,dk\\
	&\qquad= \Gamma_\Omega(\nu)\Delta(e-x)^{-\nu}\int_K\Delta(y+k^{-1}(x(e-x)^{-1}))^{-\nu}\,dk.
\end{align*}
That these two expressions are in fact the same, follows from the classical formula (see \cite[Lemma X.4.4]{FK94})
\begin{equation*}
	\Delta(a^{-1}+b^{-1})=\Delta(a)^{-1}\Delta(b)^{-1}\Delta(a+b).\hfill\qed
\end{equation*}

\subsection{Second proof using representation theory}\label{sec:SecondProof}

The composition $\gamma_\nu\circ\calL_\nu$ of the Laplace transform and the Cayley transform is an isometric isomorphism $L^2_\nu(\Omega)\to\calH_\nu^2(D)$ intertwining the action of $G$. Hence, it restricts to an isomorphism $L^2_\nu(\Omega)^{K\cap L}\to\calH_\nu^2(D)^{K\cap L}$ between the $(K\cap L)$-invariant vectors. By \eqref{eq:LaplaceOfLaguerre} and \eqref{eq:DefPsi} we have
$$ \gamma_\nu\circ\calL_\nu\ell_{\bf m}^\nu = (\nu)_{\bf m}\Phi_{\bf m}, $$
so $\gamma_\nu\circ\calL_\nu$ maps the orthogonal basis $(\ell_{\bf m}^\nu)_{\bf m}$ of $L^2_\nu(\Omega)^{K\cap L}$ to the orthogonal basis $((\nu)_{\bf m}\Phi_{\bf m})_{\bf m}$. It follows from \eqref{eq:NormLaguerreFunctions} that $\gamma_\nu\circ\calL_\nu:L^2_\nu(\Omega)^{K\cap L}\to\calH_\nu^2(D)^{K\cap L}$ is given by the integral kernel
\begin{equation}
	K(w,u) = \sum_{\bf m}(\nu)_{\bf m}\Phi_{\bf m}(w)\frac{\ell_{\bf m}^\nu(u)}{\|\ell_{\bf m}^\nu\|_{L^2_\nu(\Omega)}^2} = \sum_{\bf m}\frac{d_{\bf m}}{(\frac{n}{r})_{\bf m}}\Phi_{\bf m}(w)\ell_{\bf m}^\nu(u).\label{eq:KernelCayleyLaplace1}
\end{equation}

On the other hand, for $\varphi\in L^2_\nu(\Omega)^{K\cap L}$ we have by definition
\begin{align*}
	& \gamma_\nu\circ\calL_\nu\varphi(w) = \Delta(e-w)^{-\nu}\calL_\nu\varphi\left(i\frac{e+w}{e-w}\right)\\
	={}& \frac{2^n}{\Gamma_\Omega(\nu)}\Delta(e-w)^{-\nu}\int_\Omega e^{-(u|(e+w)(e-w)^{-1})}\varphi(u)\Delta(2u)^{\nu-\frac{n}{r}}\,du\\
	={}& \frac{2^n}{\Gamma_\Omega(\nu)}\int_\Omega\left(\Delta(e-w)^{-\nu}\int_{K\cap L} e^{-(ku|(e+w)(e-w)^{-1})}\,dk\right)\varphi(u)\Delta(2u)^{\nu-\frac{n}{r}}\,du,
\end{align*}
where we have used that $\Delta(kx)=\Delta(x)$ for all $k\in K\cap L$. It follows that $\gamma_\nu\circ\calL_\nu$ is on the subspace of $(K\cap L)$-invariant vectors given by the integral kernel
\begin{equation}
	K(w,u) = \Delta(e-w)^{-\nu}\int_{K\cap L} e^{-(ku|(e+w)(e-w)^{-1})}\,dk.\label{eq:KernelCayleyLaplace2}
\end{equation}
Comparing \eqref{eq:KernelCayleyLaplace2} with \eqref{eq:KernelCayleyLaplace1} shows the claim.

\subsection{Applying the second proof to the Fock model}\label{sec:SecondProofVar}

Let us carry out the same computations as in Section~\ref{sec:SecondProof} for the Segal--Bargmann transform $\BB_\nu:L^2_\nu(\Omega)\to\calF^2_\nu(V_\CC)$. Restricting the Segal--Bargmann transform to the subspace of $(K\cap L)$-invariant vectors yields an isometric isomorphism $\BB_\nu:L^2_\nu(\Omega)^{K\cap L}\to\calF^2_\nu(V_\CC)^{K\cap L}$. By \eqref{eq:SBonLaguerre} it maps the orthogonal basis $(\ell_{\bf m}^\nu)_{\bf m}$ of $L^2_\nu(\Omega)^{K\cap L}$ to the orthogonal basis $((-\frac{1}{2})^{|{\bf m}|}\Phi_{\bf m})_{\bf m}$ of $\calF^2_\nu(V_\CC)^{K\cap L}$, so its integral kernel is given by
\begin{align*}
 K(z,x)  &= \sum_{\bf m}\left(-\frac{1}{2}\right)^{|{\bf m}|}\Phi_{\bf m}(z)\frac{\ell_{\bf m}^\nu(x)}{\|\ell_{\bf m}^\nu\|_{L^2_\nu(\Omega)}^2}\\
 & = \sum_{\bf m}\frac{(-1)^{|{\bf m}|}d_{\bf m}}{2^{|{\bf m}|}(\frac{n}{r})_{\bf m}(\nu)_{\bf m}}\Phi_{\bf m}(-z)\ell_{\bf m}^\nu(x). 
 \end{align*}

On the other hand, for $\varphi\in L^2_\nu(\Omega)^{K\cap L}$ we have by definition
\begin{align*}
	\BB_\nu\varphi(z) &= \frac{2^n}{\Gamma_\Omega(\nu)}e^{-\frac{1}{2}\tr(z)}\int_\Omega\calI_\nu(z,x)e^{-\tr(x)}\varphi(x)\Delta(2x)^{\nu-\frac{n}{r}}\,dx\\
	&= \frac{2^n}{\Gamma_\Omega(\nu)}e^{-\frac{1}{2}\tr(z)}\int_\Omega\left(\int_{K\cap L}\calI_\nu(z,kx)\,dk\right)e^{-\tr(x)}\varphi(x)\Delta(2x)^{\nu-\frac{n}{r}}\,dx.
\end{align*}
It follows that
$$ K(z,x) = e^{-\frac{1}{2}\tr(z)-\tr(x)}\int_{K\cap L}\calI_\nu(z,kx)\,dk. $$

This shows:

\begin{theorem}
	For $z\in V_\CC$, $x\in\overline{\Omega}$ and $\nu>(r-1)\frac{d}{2}$ we have
	$$ \sum_{\bf m}\frac{(-1)^{|{\bf m}|}d_{\bf m}}{2^{|{\bf m}|}(\frac{n}{r})_{\bf m}(\nu)_{\bf m}}\Phi_{\bf m}(z)L_{\bf m}^\nu(2x) = e^{-\frac{1}{2}\tr(z)}\int_{K\cap L}\calI_\nu(z,kx)\,dk. $$
	This identity is still valid for $\nu=k\frac{d}{2}\in\calW$ if $x\in\partial\Omega$ is of rank at most $k$.
\end{theorem}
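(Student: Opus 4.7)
The plan is to imitate the strategy of Section~\ref{sec:SecondProof} with the Segal--Bargmann transform $\BB_\nu:L^2_\nu(\Omega)\to\calF^2_\nu(V_\CC)$ replacing the composition $\gamma_\nu\circ\calL_\nu$. Since $\BB_\nu$ is a $\widetilde{G}$-intertwining unitary isomorphism, it restricts to an isometric isomorphism between the $(K\cap L)$-invariant subspaces $L^2_\nu(\Omega)^{K\cap L}$ and $\calF^2_\nu(V_\CC)^{K\cap L}$, and this restriction is given by an integral kernel $K(z,x)$ which I will compute in two complementary ways.

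First, I read off $K(z,x)$ from the $K$-type expansion: formula \eqref{eq:SBonLaguerre} asserts that $\BB_\nu$ sends the orthogonal basis $(\ell_{\bf m}^\nu)_{\bf m}$ of $L^2_\nu(\Omega)^{K\cap L}$ to the orthogonal basis $((-\tfrac{1}{2})^{|{\bf m}|}\Phi_{\bf m})_{\bf m}$ of $\calF^2_\nu(V_\CC)^{K\cap L}$, and combining this with the norm formula \eqref{eq:NormLaguerreFunctions} yields
$$ K(z,x) = \sum_{\bf m}\frac{(-1)^{|{\bf m}|}d_{\bf m}}{2^{|{\bf m}|}(\tfrac{n}{r})_{\bf m}(\nu)_{\bf m}}\Phi_{\bf m}(z)\ell_{\bf m}^\nu(x). $$
Second, I obtain $K(z,x)$ straight from the definition of $\BB_\nu$: for a $(K\cap L)$-invariant $\varphi$, the reference measure $\Delta(2x)^{\nu-n/r}\,dx$ and $\varphi$ itself are $K\cap L$-invariant, so I may freely symmetrize $\calI_\nu(z,x)$ over $K\cap L$ inside the defining integral, producing the kernel
$$ K(z,x) = e^{-\frac{1}{2}\tr(z)-\tr(x)}\int_{K\cap L}\calI_\nu(z,kx)\,dk. $$
Equating the two expressions for $K(z,x)$ and substituting $\ell_{\bf m}^\nu(x) = e^{-\tr(x)}L_{\bf m}^\nu(2x)$ to cancel the factor $e^{-\tr(x)}$ gives the stated formula.

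The main obstacle is making the argument work in the degenerate regime $\nu = k\tfrac{d}{2}\in\calW$, where $L^2_\nu(\Omega)$ is concentrated on the rank-$\leq k$ boundary orbit of $\Omega$. Here one must invoke the results of \cite{Moe13} to ensure that $\BB_\nu$ still exists as a unitary intertwiner, that the $I$-Bessel kernel $\calI_\nu(z,x)$ has a meaningful restriction to such $x$, and that the eigenfunction identity $\BB_\nu\ell_{\bf m}^\nu = (-\tfrac{1}{2})^{|{\bf m}|}\Phi_{\bf m}$ persists; granting these analytic-continuation facts, the two-kernel computation above goes through unchanged because both sides depend holomorphically on $z\in V_\CC$ and continuously on $x$ in the relevant orbit. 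A secondary technical point is the absolute convergence of the series for $K(z,x)$, which follows from the known growth estimates for $\Phi_{\bf m}$ and $\ell_{\bf m}^\nu$ used already in defining $\calI_\nu$.
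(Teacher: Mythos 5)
Your proposal is correct and follows essentially the same route as the paper: both compute the integral kernel of $\BB_\nu$ restricted to $(K\cap L)$-invariants twice, once from the eigenfunction identity \eqref{eq:SBonLaguerre} together with the norm formula \eqref{eq:NormLaguerreFunctions}, and once by symmetrizing $\calI_\nu(z,\cdot)$ over $K\cap L$ in the defining integral, then equate. Your added remarks on the boundary case $\nu=k\frac{d}{2}$ and on convergence are consistent with the paper's (implicit) treatment and do not change the argument.
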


\begin{remark}\label{rem:FockModelKernelExpansionSpecialCases}
	Two special cases of this identity are:
	\begin{itemize}
		\item For $x=te$, $t\in\RR$:
		$$ \sum_{\bf m}\frac{(-1)^{|{\bf m}|}d_{\bf m}L_{\bf m}^\nu(2te)}{2^{|{\bf m}|}(\frac{n}{r})_{\bf m}(\nu)_{\bf m}}\Phi_{\bf m}(z) = e^{-\frac{1}{2}\tr(z)}\calI_\nu(tz). $$
		This is the series expansion of the $(K\cap L)$-invariant function $z\mapsto e^{-\frac{1}{2}\tr(z)}\calI_\nu(tz)$ into the spherical polynomials $\Phi_{\bf m}(z)$ which form a basis for the $(K\cap L)$-invariant polynomials. For $t=0$, we obtain the expansion of $e^{\tr(w)}$ into spherical polynomials (see \cite[Proposition XII.1.3~(i)]{FK94}).
		\item For $z=t e$, $t\in\CC$:
		$$ \sum_{\bf m}\frac{(-t)^{|{\bf m}|}e^{\frac{1}{2}\tr (tx) }d_{\bf m}}{2^{|{\bf m}|}(\frac{n}{r})_{\bf m}(\nu)_{\bf m}}L_{\bf m}^\nu(2x) = \calI_\nu(tx). $$
		This is the series expansion of the $(K\cap L)$-invariant function $\calI_\nu(tx)$ into the polynomials $L_{\bf m}^\nu(2x)$ which also form a basis for the $(K\cap L)$-invariant polynomials.
	\end{itemize}
\end{remark}

\section{$K$-type expansion of Whittaker vectors}\label{sec:KTypeExpansion}

For an irreducible unitary representation $(\pi,\calH)$ of $G$ let $\calH^\infty$ denote the subspace of smooth vectors and let $\calH^{-\infty}=\overline{\calH^\infty}'$ be its conjugate dual, the space of distribution vectors. We embed $\calH$ into $\calH^{-\infty}$ by
$$ \iota:\calH\hookrightarrow\calH^{-\infty}, \quad \iota(v)(\overline{\varphi}) = \langle v,\varphi\rangle \qquad (\varphi\in\calH^\infty). $$
The group $G$ acts on $\calH^\infty$ by restriction of $\pi$, i.e. $\pi^\infty(g)=\pi(g)|_{\calH^\infty}$, and on $\calH^{-\infty}$ by the conjugate dual representation $\pi^{-\infty}$. (Note that this definition differs from the one in \cite{FOO22}, where no complex conjugate was used. However, all results on Whittaker vectors can be translated canonically by applying complex conjugation since we only consider scalar-valued functions.)

For $v\in\Omega$, we define a generic unitary character $\psi_v$ of $N$ by
$$ \psi_v(n_u) = e^{-i(u|v)} \qquad (u\in V). $$
For an irreducible unitary representation $(\pi,\calH)$ of $G$ we let $\calH^{-\infty,\psi_v}$ denote the space of Whittaker vectors, i.e. the space of all $W\in\calH^{-\infty}$ such that $\pi^{-\infty}(n)W=\psi_v(n)W$.

In \cite{FOO22} we found explicit expressions for the Whittaker vectors on all holomorphic discrete series representations in the first three models described in Section~\ref{sec:ScalarHolDS}. For the case of scalar type holomorphic discrete series, the space of Whittaker vectors is one-dimensional, and in this section we find their $K$-type expansion.

\subsection{The $L^2$-model}

In the $L^2$-model, the space $L^2_\nu(\Omega)^{-\infty}$ can be identified with a space of distributions on $\Omega$ since $C_c^\infty(\Omega)\subseteq L^2_\nu(\Omega)^\infty\subseteq C^\infty(\Omega)$. Under this identification, the Dirac distribution $W_{\nu,v}^\Omega=\delta_v$ given by
$$ W^\Omega_{\nu,v}(\overline{\varphi}) = \overline{\varphi(v)} \qquad (\varphi\in L^2_\nu(\Omega)^\infty) $$
is the unique (up to scalar multiples) Whittaker vector on $L^2_\nu(\Omega)$. (Note that although we assumed $\nu>\frac{2n}{r}-1$ in \cite{FOO22}, the same proof works for $\nu>(r-1)\frac{d}{2}$.)

\begin{theorem}\label{thm:ExpansionDirac}
	For $\nu>(r-1)\frac{d}{2}$ and $t>0$ we have
	\begin{equation}\label{eq:expansion}
		W_{\nu,te}^\Omega = \sum_{\bf m}
		\frac{d_{\bf m}\ell^\nu_{\bf m} (te)}{(\frac{n}{r})_{\bf m}(\nu)_{\bf m}}\ell_{\bf m}^\nu
	\end{equation}
	with convergence in the weak$^*$-topology.
\end{theorem}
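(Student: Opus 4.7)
The strategy is to verify the weak$^*$ identity by first matching Fourier coefficients on the Laguerre basis and then extending to all smooth vectors via $(K\cap L)$-invariance and continuity. Since $te$ is fixed by $K\cap L=\Aut(V)$, both $W^\Omega_{\nu,te}$ and each $\ell_{\bf m}^\nu$ are $(K\cap L)$-invariant, so after averaging a general smooth $\varphi\in L^2_\nu(\Omega)^\infty$ over $K\cap L$ it suffices to test against $(K\cap L)$-invariant smooth vectors.

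On the basis vector $\ell_{\bf n}^\nu$ itself, the left-hand side of \eqref{eq:expansion} gives $W^\Omega_{\nu,te}(\overline{\ell_{\bf n}^\nu})=\overline{\ell_{\bf n}^\nu(te)}=\ell_{\bf n}^\nu(te)$ (the value is real since $L_{\bf n}^\nu$ has real coefficients and $te\in V$ is real), while orthogonality of the Laguerre basis together with \eqref{eq:NormLaguerreFunctions} collapses the series on the right to the single term
\[\frac{d_{\bf n}\ell_{\bf n}^\nu(te)}{(\tfrac{n}{r})_{\bf n}(\nu)_{\bf n}}\|\ell_{\bf n}^\nu\|_{L^2_\nu(\Omega)}^2=\ell_{\bf n}^\nu(te),\]
so the two sides agree on the basis. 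To promote this coefficient match to weak$^*$ convergence I would take a general $(K\cap L)$-invariant smooth $\varphi$ with Laguerre expansion $\varphi=\sum_{\bf m}a_{\bf m}\ell_{\bf m}^\nu$, $a_{\bf m}=\langle\varphi,\ell_{\bf m}^\nu\rangle/\|\ell_{\bf m}^\nu\|^2$, and argue that the partial sums converge to $\varphi$ not merely in $L^2_\nu(\Omega)$ but in the Fr\'echet topology of smooth vectors: the $K$-Casimir acts on $\calP_{\bf m}$ by an eigenvalue polynomial in $|{\bf m}|$ while smoothness forces $a_{\bf m}$ to decay faster than any polynomial, so every Sobolev-type seminorm built from $U(\frakg)$ converges. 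Continuity of the distribution vector $W^\Omega_{\nu,te}$ on smooth vectors then lets one commute it with the limit, yielding $\varphi(te)=\sum_{\bf m}a_{\bf m}\ell_{\bf m}^\nu(te)$, which after inserting \eqref{eq:NormLaguerreFunctions} for the norms is exactly \eqref{eq:expansion} in the weak$^*$ sense.

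The main obstacle is this convergence-in-smooth-topology claim — equivalently, that the $L^2$-Laguerre expansion of a smooth vector converges pointwise at $te$. An alternative route that sidesteps the technical estimates and ties \eqref{eq:expansion} directly to the generating function \eqref{eq:GenFctLaguerreFct} is to transport both sides under the isometric intertwiner $\gamma_\nu\circ\calL_\nu:L^2_\nu(\Omega)\to\calH^2_\nu(D)$. This map sends $\ell_{\bf m}^\nu$ to $(\nu)_{\bf m}\Phi_{\bf m}$ by \eqref{eq:LaplaceOfLaguerre}, so the transported right-hand side becomes $\sum_{\bf m}\frac{d_{\bf m}\ell_{\bf m}^\nu(te)}{(\frac{n}{r})_{\bf m}}\Phi_{\bf m}(w)$, which by \eqref{eq:GenFctLaguerreFct} sums locally uniformly on $D$ to the holomorphic function $\Delta(e-w)^{-\nu}\int_{K\cap L}e^{-(kte|(e+w)(e-w)^{-1})}\,dk$. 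Identifying this function as the transported distribution $(\gamma_\nu\calL_\nu)^{-\infty}W^\Omega_{\nu,te}$ — which follows from the kernel formula \eqref{eq:KernelCayleyLaplace2} via the duality computation of Section \ref{sec:SecondProof} — then completes the proof, with weak$^*$ convergence inherited from the locally uniform convergence on $D$.
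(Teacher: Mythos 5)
Your main argument is essentially the paper's own proof: both expand the $(K\cap L)$-invariant Dirac distribution $W^\Omega_{\nu,te}=\delta_{te}$ in the orthogonal basis $(\ell_{\bf m}^\nu)_{\bf m}$ of $L^2_\nu(\Omega)^{K\cap L}$ and read off the coefficients by pairing with basis elements and inserting \eqref{eq:NormLaguerreFunctions}. The paper is in fact terser---it does not spell out the weak$^*$ convergence issue you isolate (rapid decay of $K$-type coefficients of smooth vectors versus the polynomial growth of the Casimir eigenvalues), nor does it use your alternative route through the generating function \eqref{eq:GenFctLaguerreFct} and the kernel \eqref{eq:KernelCayleyLaplace2}; both additions are sound and make the argument more complete, and the second one is essentially how the paper later recovers \eqref{eq:WhittakerExpansionD} as a special case of \eqref{eq:GenFctLaguerreFct}.
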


\begin{proof}
	The distribution vector $W_{\nu,te}^\Omega=\delta_{te}$ is $(K\cap L)$-invariant and can therefore be expanded as
	\[ W_{\nu,te}^\Omega = \sum_{{\bf m} } a ({\bf m} ,\nu )\frac{\ell_{\bf m}^\nu}{\|\ell_{\bf m}^\nu\|} \]
	for some constants $a({\bf m},\nu)\in\CC$. Since the functions $(\|\ell_{\bf m}^\nu\|^{-1}\ell_{\bf m}^\nu)_{\bf m}$ form an orthonormal basis of $L^2_\nu(\Omega)^{K\cap L}$, it follows that
	\[ a({\bf m} ,\nu) = \left\langle W_{\nu,te}^\Omega,\frac{\ell^\nu_{\bf m} }{\|\ell^\nu_{\bf m}\|}\right\rangle= \frac{\ell_{\bf m}^\nu(te)}{\|\ell^\nu_{\bf m} \|} .\]
	Now the claim follows with \eqref{eq:NormLaguerreFunctions}.
\end{proof}

\begin{remark}
	\begin{enumerate}
		\item Theorem~\ref{thm:ExpansionDirac} does not extend to $\nu=k\frac{d}{2}\in\calW$. The reason is that in this case $(\nu)_{\bf m}=0$ whenever $m_{k+1}>0$. In order to make sense of the right hand side, one would have to view it as a series expansion for functions/distributions on the elements of rank at most $k$ in $\partial\Omega$. For $x$ of rank at most $k$, one has $\ell_{\bf m}^\nu(x)=0$ for $m_{k+1}>0$, so the sum can be viewed as a sum over $m_1\geq\ldots\geq m_k\geq m_{k+1}=\ldots=m_r=0$. However, the left hand side $\delta_{te}$ is not defined as a distribution on the boundary $\partial\Omega$ of $\Omega$ since $te$ is contained in the interior of $\Omega$.
		\item For $t\to0$, \eqref{eq:expansion} becomes
		$$ \delta_0 = \sum_{\bf m}\frac{d_{\bf m}}{(\frac{n}{r})_{\bf m}}\ell_{\bf m}^\nu. $$
		Note that $\delta_0$ is a distribution vector in $L^2_\nu(\Omega)^{-\infty}$ for $\nu>(r-1)\frac{d}{2}$ by \cite[Theorem 6.11]{Moe13}.
	\end{enumerate}
\end{remark}

\subsection{The tube domain model}

In the tube domain model, the space $\calH^2_\nu(T_\Omega)^{-\infty}$ can be identified with a space of holomorphic functions $F$ on $T_\Omega$ by
$$ F(\overline{\varphi}) = d_\nu\int_{T_\Omega}F(z)\overline{\varphi(z)}\Delta(y)^{\nu-\frac{2n}{r}}\,dx\,dy \qquad (\varphi\in\calH_\nu^2(T_\Omega)^\infty). $$
Applying the Laplace transform to $W_{\nu,v}^\Omega=\delta_v$ we obtain that the following holomorphic function on $T_\Omega$ is a Whittaker vector on $\calH_\nu^2(T_\Omega)$:
$$ W_{\nu,v}^{T_\Omega}(z) = \calL_\nu W_{\nu,v}^\Omega(z) = e^{i(z|v)} \qquad (z\in T_\Omega). $$
Applying the Laplace transform to \eqref{eq:expansion} and using \eqref{eq:LaplaceOfLaguerre} yields the $K$-type expansion of $W_{\nu,te}^{T_\Omega}$:

\begin{corollary}
	For $\nu>(r-1)\frac{d}{2}$ and $t>0$ we have
	\begin{equation}\label{eq:tP}
		W_{\nu,te}^{T_\Omega} = \sum_{\bf m}\frac{d_{\bf m}\ell_{\bf m}^\nu(te)}{(\frac{n}{r})_{\bf m}}\Psi_{\bf m}^\nu.
	\end{equation}
\end{corollary}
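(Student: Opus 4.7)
The plan is to deduce the expansion for $W_{\nu,te}^{T_\Omega}$ from the already established expansion for $W_{\nu,te}^\Omega$ in Theorem~\ref{thm:ExpansionDirac} by transporting it through the Laplace transform. Since $\calL_\nu:L^2_\nu(\Omega)\to\calH_\nu^2(T_\Omega)$ is an isometric isomorphism intertwining the $\widetilde{G}$-actions, it induces by restriction a topological isomorphism of smooth vectors $L^2_\nu(\Omega)^\infty\to\calH_\nu^2(T_\Omega)^\infty$, and dually a weak${}^*$-continuous isomorphism $\calL_\nu:L^2_\nu(\Omega)^{-\infty}\to\calH_\nu^2(T_\Omega)^{-\infty}$ of the distribution vectors that intertwines the representations $L_\nu^{-\infty}$ and $T_\nu^{-\infty}$. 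In particular, $\calL_\nu$ sends Whittaker vectors of $L^2_\nu(\Omega)$ to Whittaker vectors of $\calH_\nu^2(T_\Omega)$ with the same character.

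First I would verify, as noted before the corollary, that $\calL_\nu W_{\nu,te}^\Omega = W_{\nu,te}^{T_\Omega}$, either by the direct identification $\calL_\nu\delta_{te}(z)=e^{i(z|te)}$ from the definition of the Laplace transform, or by noting that both sides span the one-dimensional space of Whittaker vectors for the character $\psi_{te}$ and fixing the normalization by evaluating on a suitable vector. Next I would apply $\calL_\nu$ term-by-term to the expansion \eqref{eq:expansion}. Using the identity $\calL_\nu\ell_{\bf m}^\nu=(\nu)_{\bf m}\Psi_{\bf m}^\nu$ from \eqref{eq:LaplaceOfLaguerre}, the coefficient transforms as
\begin{equation*}
\frac{d_{\bf m}\ell_{\bf m}^\nu(te)}{(\tfrac{n}{r})_{\bf m}(\nu)_{\bf m}}\cdot(\nu)_{\bf m}\Psi_{\bf m}^\nu = \frac{d_{\bf m}\ell_{\bf m}^\nu(te)}{(\tfrac{n}{r})_{\bf m}}\Psi_{\bf m}^\nu,
\end{equation*}
which is exactly the claimed formula \eqref{eq:tP}.

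The only subtlety is that the series in \eqref{eq:expansion} converges in the weak${}^*$-topology of $L^2_\nu(\Omega)^{-\infty}$ rather than in norm, so one must justify that the termwise image under $\calL_\nu$ converges in the weak${}^*$-topology of $\calH_\nu^2(T_\Omega)^{-\infty}$ to $\calL_\nu W_{\nu,te}^\Omega$. This is automatic from the weak${}^*$-continuity of $\calL_\nu$ on distribution vectors, which in turn is a formal consequence of $\calL_\nu$ restricting to a topological isomorphism on smooth vectors; concretely, pairing with a test vector $\overline{\varphi}\in\calH_\nu^2(T_\Omega)^\infty$ corresponds under $\calL_\nu^{-1}$ to pairing with $\overline{\calL_\nu^{-1}\varphi}\in L^2_\nu(\Omega)^\infty$.

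I do not expect any real obstacle here: the corollary is essentially a direct transport of structure, and all the analytic content (existence of the expansion and identification of the coefficients) is already carried by Theorem~\ref{thm:ExpansionDirac} together with \eqref{eq:LaplaceOfLaguerre}. The only point that requires a brief word is the weak${}^*$-continuity of the extended Laplace transform on distribution vectors, which I would handle in one or two sentences rather than as a separate lemma.
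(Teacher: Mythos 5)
Your proposal is correct and matches the paper's argument exactly: the paper likewise obtains \eqref{eq:tP} by applying $\calL_\nu$ to the expansion \eqref{eq:expansion} and invoking $\calL_\nu\ell_{\bf m}^\nu=(\nu)_{\bf m}\Psi_{\bf m}^\nu$, having already identified $W_{\nu,v}^{T_\Omega}=\calL_\nu\delta_v=e^{i(\cdot|v)}$. Your extra remarks on weak${}^*$-continuity of the extended Laplace transform are a sound (and welcome) elaboration of a step the paper leaves implicit.
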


\subsection{The bounded domain model}

As in the tube domain model, we identify $\calH_\nu^2(D)^{-\infty}$ with a space of holomorphic functions $F$ on $D$ by
$$ F(\overline{\varphi}) = c_\nu\int_D F(w)\overline{\varphi(w)}h(w)^{\nu-\frac{2n}{r}}\,dw \qquad (\varphi\in\calH_\nu^2(D)^\infty). $$
We apply the Cayley transform to $W_{\nu,v}^{T_\Omega}$ to obtain a Whittaker vector on $\calH_\nu^2(D)$:
$$ W_{\nu,v}^D(w) = \gamma_\nu W_{\nu,v}^{T_\Omega}(w) = \Delta(e-w)^{-\nu}e^{-((e+w)(e-w)^{-1}|v)}. $$
Applying the Cayley transform to \eqref{eq:tP} and using $\gamma_\nu\Psi_{\bf m}^\nu=\Phi_{\bf m}$ yields:

\begin{corollary}
	For $\nu>(r-1)\frac{d}{2}$ and $t>0$ we have
	\begin{equation}
		W_{\nu,te}^D = \sum_{\bf m}\frac{d_{\bf m}\ell_{\bf m}^\nu(te)}{(\frac{n}{r})_{\bf m}}\Phi_{\bf m}.\label{eq:WhittakerExpansionD}
	\end{equation}
\end{corollary}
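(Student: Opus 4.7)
The plan is to obtain \eqref{eq:WhittakerExpansionD} by transporting the tube-domain expansion \eqref{eq:tP} through the Cayley transform $\gamma_\nu$, exactly as the preceding corollary was deduced from Theorem~\ref{thm:ExpansionDirac} via the Laplace transform. Since $\gamma_\nu:\calH_\nu^2(T_\Omega)\to\calH_\nu^2(D)$ is an isometric isomorphism of unitary $\widetilde{G}$-modules, it restricts to an isomorphism of the spaces of smooth vectors and dualizes to an isomorphism $\calH_\nu^2(T_\Omega)^{-\infty}\to\calH_\nu^2(D)^{-\infty}$, which intertwines $T_\nu^{-\infty}$ and $D_\nu^{-\infty}$. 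In particular, the dualized $\gamma_\nu$ sends the space of $N$-Whittaker vectors with character $\psi_v$ for $T_\nu$ to the corresponding space for $D_\nu$, and is weak$^*$-continuous.

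Concretely, I would first observe that $\gamma_\nu W_{\nu,v}^{T_\Omega}=W_{\nu,v}^D$. Under the identifications of $\calH_\nu^2(T_\Omega)^{-\infty}$ and $\calH_\nu^2(D)^{-\infty}$ with spaces of holomorphic functions described in the preceding two subsections, the action of $\gamma_\nu$ on distribution vectors is given by the same formula $F\mapsto\Delta(e-w)^{-\nu}F(i(e+w)(e-w)^{-1})$ as for $\calH_\nu^2$-functions; applying this to $W_{\nu,v}^{T_\Omega}(z)=e^{i(z|v)}$ directly yields $W_{\nu,v}^D(w)=\Delta(e-w)^{-\nu}e^{-((e+w)(e-w)^{-1}|v)}$. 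This is exactly the formula displayed in the excerpt for the bounded-domain Whittaker vector.

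Next, I would apply $\gamma_\nu$ termwise to the expansion \eqref{eq:tP}. On the $(K\cap L)$-spherical $K$-type vectors one has $\gamma_\nu\Psi_{\bf m}^\nu=\Phi_{\bf m}$, as noted in the bounded-domain realization. Weak$^*$-convergence of \eqref{eq:tP} (inherited from that of \eqref{eq:expansion} through the Laplace transform, which dualizes continuously) is preserved by the dualized Cayley transform, so
\begin{equation*}
W_{\nu,te}^D=\gamma_\nu W_{\nu,te}^{T_\Omega}=\sum_{\bf m}\frac{d_{\bf m}\ell_{\bf m}^\nu(te)}{(\frac{n}{r})_{\bf m}}\gamma_\nu\Psi_{\bf m}^\nu=\sum_{\bf m}\frac{d_{\bf m}\ell_{\bf m}^\nu(te)}{(\frac{n}{r})_{\bf m}}\Phi_{\bf m},
\end{equation*}
which is the claim.

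There is no real obstacle: the only point requiring minor care is the weak$^*$-continuity of the dualized $\gamma_\nu$, but this is automatic from the fact that $\gamma_\nu$ is bounded (in fact unitary) and preserves the subspace of smooth vectors, so its adjoint is the continuous extension to distribution vectors. Everything else is a bookkeeping exercise using the explicit formulas already recorded in Section~\ref{sec:ScalarHolDS}.
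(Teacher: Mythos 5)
Your proof is correct and follows exactly the paper's argument: apply the Cayley transform $\gamma_\nu$ to the tube-domain Whittaker vector and to the expansion \eqref{eq:tP} termwise, using $\gamma_\nu\Psi_{\bf m}^\nu=\Phi_{\bf m}$. The additional remarks on weak$^*$-continuity of the dualized $\gamma_\nu$ are a sound (if implicit in the paper) justification of the termwise step.
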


\begin{remark}
	Formula \eqref{eq:WhittakerExpansionD} is a special case of \eqref{eq:GenFctLaguerreFct} for $u=te$.
\end{remark}

\subsection{The Fock model}

As in the tube domain and the bounded domain model, we identify $\calF_\nu^2(V_\CC)^{-\infty}$ with a space of holomorphic functions $F$ on $V_\CC$ by
$$ F(\overline{\varphi}) = \frac{2^n}{\Gamma_\Omega(\nu)} \int_{V_\CC}F(z)\overline{\varphi(z)}\omega_\nu(z)\,d\mu_\nu(z). $$
Applying the Segal--Bargmann transform $\BB_\nu$ to $W_{\nu,v}^\Omega=\delta_v$ we obtain a Whittaker vector on $\calF_\nu^2(V_\CC)$:
$$ W_{\nu,v}^\calF(z) = \BB_\nu W_{\nu,v}^\Omega(z) = e^{-\frac{1}{2}\tr(z)}\calI_\nu(z,v)e^{-\tr(v)}. $$
Note that for $v=te$ this equals
$$ W_{\nu,te}^\calF(z) = e^{-rt}e^{-\frac{1}{2}\tr(z)}\calI_\nu(tz). $$
Applying $\BB_\nu$ to \eqref{eq:expansion} and using \eqref{eq:SBonLaguerre} gives the following expansion:

\begin{corollary}
	For $\nu>(r-1)\frac{d}{2}$ and $t>0$ we have
	$$ W_{\nu,te}^\calF = \sum_{\bf m}\frac{(-1)^{|{\bf m}|}d_{\bf m}\ell^\nu_{\bf m}(te)}{2^{|{\bf m}|}(\frac{n}{r})_{\bf m}(\nu)_{\bf m}}\Phi_{\bf m}. $$
\end{corollary}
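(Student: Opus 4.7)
The proof should be a direct corollary of Theorem~\ref{thm:ExpansionDirac} via the Segal--Bargmann transform, mirroring the structure used for the tube and bounded domain models in the preceding subsections. The plan is to transport the weak$^*$-expansion of the Dirac distribution $\delta_{te}\in L^2_\nu(\Omega)^{-\infty}$ through $\BB_\nu$ to $\calF^2_\nu(V_\CC)^{-\infty}$, using the known action of $\BB_\nu$ on the Laguerre functions.

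First I would recall that $\BB_\nu:L^2_\nu(\Omega)\to\calF^2_\nu(V_\CC)$ is a unitary $\widetilde G$-intertwiner, hence it restricts to an isomorphism of smooth vectors and extends by duality (via the conjugate pairing used to embed $\calH\hookrightarrow\calH^{-\infty}$) to a continuous bijection $\BB_\nu:L^2_\nu(\Omega)^{-\infty}\to\calF^2_\nu(V_\CC)^{-\infty}$, which is weak$^*$-continuous. By construction, this extension sends $W^\Omega_{\nu,te}=\delta_{te}$ to $W^\calF_{\nu,te}$, as already recorded in the subsection.

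Next I would apply $\BB_\nu$ to the expansion \eqref{eq:expansion} of $W^\Omega_{\nu,te}$. Because the series converges in the weak$^*$-topology and $\BB_\nu$ is weak$^*$-continuous, it commutes with the summation. Using \eqref{eq:SBonLaguerre}, namely $\BB_\nu\ell_{\bf m}^\nu=(-\tfrac{1}{2})^{|{\bf m}|}\Phi_{\bf m}$, term by term, we obtain
\[
W^\calF_{\nu,te} \;=\; \BB_\nu\!\left(\sum_{\bf m}\frac{d_{\bf m}\,\ell^\nu_{\bf m}(te)}{(\tfrac{n}{r})_{\bf m}(\nu)_{\bf m}}\,\ell_{\bf m}^\nu\right) \;=\; \sum_{\bf m}\frac{(-1)^{|{\bf m}|}d_{\bf m}\,\ell^\nu_{\bf m}(te)}{2^{|{\bf m}|}(\tfrac{n}{r})_{\bf m}(\nu)_{\bf m}}\,\Phi_{\bf m},
\]
which is exactly the claimed expansion.

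The only delicate point is justifying the weak$^*$-continuity of the extended Segal--Bargmann transform on distribution vectors; however, this is automatic from the fact that a unitary intertwiner induces topological isomorphisms between the smooth and distribution vector modules (with their natural topologies coming from the $G$-action), and so commutes with weak$^*$-limits. No new computation is needed beyond \eqref{eq:SBonLaguerre} and Theorem~\ref{thm:ExpansionDirac}.
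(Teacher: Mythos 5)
Your proposal is correct and is exactly the paper's argument: the corollary is obtained by applying the Segal--Bargmann transform $\BB_\nu$ termwise to the expansion \eqref{eq:expansion} of $W^\Omega_{\nu,te}$ and invoking \eqref{eq:SBonLaguerre}. Your added remark on the weak$^*$-continuity of the extended intertwiner is a reasonable justification of a step the paper leaves implicit.
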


Note that this is the same expansion as obtained in Remark~\ref{rem:FockModelKernelExpansionSpecialCases}.

\section{Recurrence relations for the Laguerre functions}\label{sec:RecurrenceRelations}

In this section we discuss refinements of the recurrence relations for the Laguerre polynomials obtained
in \cite[Thm. 5.2]{ADO06} and \cite[Thm. 7.9]{DOZ03} and show that
the Laguerre functions $\ell_{\bf m}^\nu$ satisfy a recurrence relation when multiplied with the $(K\cap L)$-invariant linear form $\tr(x)$:

\begin{proposition}
	The following recurrence relation holds:
	\begin{equation}
		\tr(x)\ell_{\bf m}^\nu(x) = a_{\bf m}^\nu\ell_{\bf m}^\nu(x)+\sum_{j=1}^r\Big(b_{{\bf m},j}^\nu\ell_{{\bf m}+e_j}^\nu(x)+c_{{\bf m},j}^\nu\ell_{{\bf m}-e_j}^\nu(x)\Big)\label{eq:LaguerreRecurrence}
	\end{equation}
	with
	\begin{align*}
		a_{\bf m}^\nu ={}& |{\bf m}|+\frac{r\nu}{2},\\
		b_{{\bf m},j}^\nu ={}& -\frac{1}{2}\prod_{i\neq j}\frac{m_j-m_i+(i-j+1)\frac{d}{2}}{m_j-m_i+(i-j)\frac{d}{2}},\\
		c_{{\bf m},j}^\nu ={}& -\frac{1}{2}\left(\nu+m_j-(j-1)\frac{d}{2}-1\right)\left(m_j+(r-j)\frac{d}{2}\right)\\
		& \hspace{4cm}\times\prod_{i\neq j}\frac{m_j-m_i+(i-j-1)\frac{d}{2}}{m_j-m_i+(i-j)\frac{d}{2}}.
	\end{align*}
\end{proposition}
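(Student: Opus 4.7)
The strategy is to derive the recurrence directly from the analogous three-term recurrence for the generalized Laguerre polynomials $L_{\bf m}^\nu$ established in \cite[Thm.~5.2]{ADO06} and \cite[Thm.~7.9]{DOZ03}. The appearance of only the partitions ${\bf m}$ and ${\bf m}\pm e_j$ on the right-hand side is forced by the following observation: the Jordan trace $\tr$ is, up to scale, the unique $(K\cap L)$-invariant linear form on $V$, hence proportional to the spherical polynomial $\Phi_{e_1}$ (the normalizations $\tr(e)=r$ and $\Phi_{e_1}(e)=1$ give $\tr = r\Phi_{e_1}$); and multiplication by a $(K\cap L)$-invariant polynomial of degree one on the basis $\{\ell_{\bf m}^\nu\}$ of $L^2_\nu(\Omega)^{K\cap L}$ can only shift the highest weight by a single $\pm e_j$.

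The first step is to convert the problem into a purely polynomial statement. Substituting $y=2x$ in $\ell_{\bf m}^\nu(x)=e^{-\tr(x)}L_{\bf m}^\nu(2x)$ and using $\tr(2x)=2\tr(x)$, the claimed identity \eqref{eq:LaguerreRecurrence} is equivalent to
\begin{equation*}
	\tr(y)\,L_{\bf m}^\nu(y) = 2a_{\bf m}^\nu L_{\bf m}^\nu(y) + \sum_{j=1}^r\Bigl(2b_{{\bf m},j}^\nu L_{{\bf m}+e_j}^\nu(y) + 2c_{{\bf m},j}^\nu L_{{\bf m}-e_j}^\nu(y)\Bigr).
\end{equation*}
This is precisely the shape of the recurrence treated in \cite[Thm.~5.2]{ADO06} (and in \cite[Thm.~7.9]{DOZ03}), which provides an explicit expansion of $\Phi_{e_1}(y)L_{\bf m}^\nu(y)$ in the basis $\{L_{\bf n}^\nu\}$ with coefficients in terms of Pochhammer symbols and the root-multiplicity data $(j-i)\tfrac{d}{2}$.

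The second step is to match the coefficients. The diagonal coefficient $a_{\bf m}^\nu$ can be extracted either from the cited recurrence or by a direct orthogonality computation using $\langle\tr\cdot\ell_{\bf m}^\nu,\ell_{\bf m}^\nu\rangle_{L^2_\nu(\Omega)}$ together with the norm formula \eqref{eq:NormLaguerreFunctions}; the expectation value then yields $|{\bf m}|+\tfrac{r\nu}{2}$. The off-diagonal coefficients $b_{{\bf m},j}^\nu$ and $c_{{\bf m},j}^\nu$ are read off from the shift operators in the cited references, and the rational products over $i\neq j$ arise from the quotient of dimensions \eqref{eq:QuotientDimensions} after rewriting the Pochhammer symbols via \eqref{eq:PochhammerFormula}.

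The main obstacle is purely computational: because the recurrences in \cite{ADO06, DOZ03} are typically stated in different normalizations and sometimes in terms of the spherical polynomials $\Phi_{\bf m}$ rather than the polynomials $L_{\bf m}^\nu$ directly, a careful rewriting using \eqref{eq:PochhammerFormula} and \eqref{eq:QuotientDimensions} is needed to match the compact product formulas stated for $b_{{\bf m},j}^\nu$ and $c_{{\bf m},j}^\nu$. A useful sanity check is the rank-one case: substituting $r=1$ (with the convention that empty products equal $1$) and using $\ell_{\bf m}^\nu = m_1!\,\ell_{m_1}^{\nu-1}$ reduces \eqref{eq:LaguerreRecurrence} to the classical three-term recurrence $yL_n^{\nu-1}(y) = (2n+\nu)L_n^{\nu-1}(y) - (n+1)L_{n+1}^{\nu-1}(y) - (n+\nu-1)L_{n-1}^{\nu-1}(y)$, providing independent confirmation of the normalizations chosen above.
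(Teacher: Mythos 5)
Your overall route coincides with the paper's: the proposition is proved by citing the known recurrence from \cite[Thm.~5.2]{ADO06} (equivalently \cite[Thm.~7.9]{DOZ03}) and rewriting its coefficients; the rank-one sanity check and the observation that $\tr$ is, up to scale, the unique $(K\cap L)$-invariant linear form are both fine, and the passage between the recurrence for $\ell_{\bf m}^\nu$ and the one for $L_{\bf m}^\nu$ is harmless since the factor $e^{-\tr(x)}$ is common to every term.

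There is, however, a concrete gap in how you propose to obtain the stated product formula for $c_{{\bf m},j}^\nu$. In \cite{ADO06} this coefficient appears as $-\tfrac12\bigl(\nu+m_j-(j-1)\tfrac d2-1\bigr){{\bf m}\choose{\bf m}-e_j}$, i.e.\ it involves a \emph{generalized binomial coefficient}, and the step that converts this into the explicit product over $i\neq j$ is Lassalle's formula \cite[Thm.~5]{Las90},
\begin{equation*}
	{{\bf m}\choose{\bf m}-e_j} = \left(m_j+(r-j)\tfrac d2\right)\prod_{i\neq j}\frac{m_j-m_i+(i-j-1)\tfrac d2}{m_j-m_i+(i-j)\tfrac d2},
\end{equation*}
which is the actual content of the paper's proof. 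Your proposed mechanism --- that the rational products ``arise from the quotient of dimensions \eqref{eq:QuotientDimensions} after rewriting the Pochhammer symbols via \eqref{eq:PochhammerFormula}'' --- does not supply this: the product in \eqref{eq:QuotientDimensions} computes $d_{{\bf m}+e_j}/d_{\bf m}$, which is a different rational function of the $m_i$, and no combination of \eqref{eq:QuotientDimensions} and \eqref{eq:PochhammerFormula} evaluates ${{\bf m}\choose{\bf m}-e_j}$. (Those two identities are what the paper uses afterwards, in Section~\ref{sec:RecurrenceRelations}, to relate the recurrence to the Whittaker property --- they are not the ingredient needed here.) So the missing idea is precisely the explicit evaluation of the generalized binomial coefficient via \cite{Las90}; with that inserted, your argument closes.
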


\begin{proof}
	This is essentially \cite[Theorem 5.2]{ADO06}, the only difference being that the coefficient $c_{{\bf m},j}^\nu$ is stated to be
	$$ c_{{\bf m},j}^\nu = -\frac{1}{2}\left(\nu+m_j-(j-1)\frac{d}{2}-1\right){{\bf m}\choose{\bf m}-e_j}. $$
	By \cite[Theorem 5]{Las90} the binomial coefficient equals
	\begin{equation*}
		{{\bf m}\choose{\bf m}-e_j} = \left(m_j+(r-j)\frac{d}{2}\right)\prod_{i\neq j}\frac{m_j-m_i+(i-j-1)\frac{d}{2}}{m_j-m_i+(i-j)\frac{d}{2}}.\qedhere
	\end{equation*}
\end{proof}

We study how the property of $W=W_{\nu,te}^\Omega\in L^2_\nu(\Omega)^{-\infty}$ being a Whittaker vector is related to the recurrence relation \eqref{eq:LaguerreRecurrence}. In the $L^2$-model, the property of being a Whittaker vector can be formulated in terms of the Lie algebra action. More precisely, $W\in L^2_\nu(\Omega)^{-\infty}$ is a Whittaker vector for the character $\psi_{te}(n_u)=e^{-it\tr(u)}$ if and only if
\begin{equation}
	(x|u)\cdot W (x) = t\tr(u)\cdot W (x) \qquad \mbox{for all }u\in V.\label{eq:WhittakerPropertyL2}
\end{equation}
In particular, for $u=e$:
\begin{equation}
	\tr(x)\cdot W (x)  = rt\cdot W (x).\label{eq:WhittakerPropertyL2Special}
\end{equation}
Applying this to \eqref{eq:expansion} yields
$$ \sum_{\bf m}\frac{d_{\bf m}\ell_{\bf m}^\nu(te)}{(\frac{n}{r})_{\bf m}(\nu)_{\bf m}}\tr(x)\ell_{\bf m}^\nu(x) = \sum_{\bf m}rt\frac{d_{\bf m}\ell_{\bf m}^\nu(te)}{(\frac{n}{r})_{\bf m}(\nu)_{\bf m}}\ell_{\bf m}^\nu(x). $$
Applying \eqref{eq:LaguerreRecurrence} and comparing coefficients of $\ell_{\bf m}^\nu$ shows that
\begin{multline*}
	\frac{d_{\bf m}\ell_{\bf m}^\nu(te)}{(\frac{n}{r})_{\bf m}(\nu)_{\bf m}}a_{\bf m}^\nu+\sum_{j=1}^r\Bigg(\frac{d_{{\bf m}-e_j}\ell_{{\bf m}-e_j}^\nu(te)}{(\frac{n}{r})_{{\bf m}-e_j}(\nu)_{{\bf m}-e_j}}b_{{\bf m}-e_j,j}^\nu+\frac{d_{{\bf m}+e_j}\ell_{{\bf m}+e_j}^\nu(te)}{(\frac{n}{r})_{{\bf m}+e_j}(\nu)_{{\bf m}+e_j}}c_{{\bf m}+e_j,j}^\nu\Bigg)\\
	= rt\frac{d_{\bf m}\ell_{\bf m}^\nu(te)}{(\frac{n}{r})_{\bf m}(\nu)_{\bf m}}.
\end{multline*}
Using \eqref{eq:QuotientDimensions} and \eqref{eq:PochhammerFormula} this can be rewritten as
\begin{equation*}
	a_{\bf m}^\nu\ell_{\bf m}^\nu(te)+\sum_{j=1}^r\left(b_{{\bf m},j}^\nu\ell_{{\bf m}+e_j}^\nu(te)+c_{{\bf m},j}^\nu\ell_{{\bf m}-e_j}^\nu(te)\right)=rt\ell_{\bf m}^\nu(te).
\end{equation*}
This is nothing else than the recurrence relation \eqref{eq:LaguerreRecurrence} at $x=te$. Reading the computation backwards, this shows:

\begin{proposition}
	The recurrence relation \eqref{eq:LaguerreRecurrence} implies \eqref{eq:WhittakerPropertyL2Special}.
\end{proposition}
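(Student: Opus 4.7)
The proposition is the converse of the computation displayed just above its statement, so my plan is to run that argument backwards. Write $\alpha_{\bf m} := d_{\bf m}\ell_{\bf m}^\nu(te)/\bigl((\frac{n}{r})_{\bf m}(\nu)_{\bf m}\bigr)$ for the expansion coefficients in \eqref{eq:expansion}, so that $W = W_{\nu,te}^\Omega = \sum_{\bf m}\alpha_{\bf m}\ell_{\bf m}^\nu$. The weak$^*$-convergence of this expansion and continuity of multiplication by the polynomial $\tr(x)$ on $L^2_\nu(\Omega)^{-\infty}$ (the Lie algebra of $N$ acts by such multiplication operators) allow termwise manipulation.

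First, I would take the recurrence \eqref{eq:LaguerreRecurrence} evaluated at $x = te$, multiply through by $d_{\bf m}/\bigl((\frac{n}{r})_{\bf m}(\nu)_{\bf m}\bigr)$, and use \eqref{eq:QuotientDimensions} together with \eqref{eq:PochhammerFormula} to re-express $\ell_{{\bf m}\pm e_j}^\nu(te)$ in terms of $\alpha_{{\bf m}\pm e_j}$. The resulting identity takes the form
$$ a_{\bf m}^\nu\alpha_{\bf m} + \sum_{j=1}^r b_{{\bf m}-e_j,j}^\nu\alpha_{{\bf m}-e_j} + \sum_{j=1}^r c_{{\bf m}+e_j,j}^\nu\alpha_{{\bf m}+e_j} = rt\,\alpha_{\bf m}. $$

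Second, I would multiply by $\ell_{\bf m}^\nu(x)$, sum over ${\bf m}\geq 0$, and re-index the two shifted sums via ${\bf m}\mapsto{\bf m}\pm e_j$. The left-hand side becomes $\sum_{\bf m}\alpha_{\bf m}\bigl[a_{\bf m}^\nu\ell_{\bf m}^\nu(x) + \sum_j b_{{\bf m},j}^\nu\ell_{{\bf m}+e_j}^\nu(x) + \sum_j c_{{\bf m},j}^\nu\ell_{{\bf m}-e_j}^\nu(x)\bigr]$, which by \eqref{eq:LaguerreRecurrence} (now applied for general $x$) collapses to $\sum_{\bf m}\alpha_{\bf m}\,\tr(x)\ell_{\bf m}^\nu(x) = \tr(x)W(x)$; the right-hand side is $rt\,W(x)$. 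This is precisely \eqref{eq:WhittakerPropertyL2Special}.

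The main obstacle is the coefficient bookkeeping in the first step: one must verify that the products over $i\neq j$ in $b_{{\bf m},j}^\nu$ and $c_{{\bf m},j}^\nu$ combine with the ratios $d_{{\bf m}\pm e_j}/d_{\bf m}$ from \eqref{eq:QuotientDimensions} and $(\frac{n}{r})_{{\bf m}\pm e_j}/(\frac{n}{r})_{\bf m}$, $(\nu)_{{\bf m}\pm e_j}/(\nu)_{\bf m}$ from \eqref{eq:PochhammerFormula} to produce exactly $b_{{\bf m}-e_j,j}^\nu$ and $c_{{\bf m}+e_j,j}^\nu$ after the shift. This is the same telescoping identity that drives the forward derivation, so no new input is needed beyond checking it once.
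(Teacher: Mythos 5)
Your proposal is correct and follows essentially the same route as the paper: the paper's proof consists precisely of ``reading the displayed computation backwards,'' i.e.\ evaluating \eqref{eq:LaguerreRecurrence} at $x=te$, converting it via \eqref{eq:QuotientDimensions} and \eqref{eq:PochhammerFormula} into the coefficient identity, and then summing against $\ell_{\bf m}^\nu(x)$ and re-indexing to recover $\tr(x)W=rtW$. Your explicit remarks on weak$^*$-convergence and on the single telescoping check of the coefficient ratios only make explicit what the paper leaves implicit.
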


\begin{remark}
	We note that for $r=1$, \eqref{eq:WhittakerPropertyL2} and \eqref{eq:WhittakerPropertyL2Special} are actually equivalent, and the previous discussion shows that the property of $W_{\nu,te}^\Omega$ being a Whittaker vector is equivalent to the recurrence relation \eqref{eq:LaguerreRecurrence}.
\end{remark}

%\bibliographystyle{amsplain}
%\bibliography{bibdb}

\providecommand{\bysame}{\leavevmode\hbox to3em{\hrulefill}\thinspace}
\providecommand{\MR}{\relax\ifhmode\unskip\space\fi MR }
% \MRhref is called by the amsart/book/proc definition of \MR.
\providecommand{\MRhref}[2]{%
	\href{http://www.ams.org/mathscinet-getitem?mr=#1}{#2}
}
\providecommand{\href}[2]{#2}

\end{document}